\newcommand{\one}{\mathbf{1}}
\newcommand{\pd}{\partial}
\newcommand{\ch}{\operatorname{ch}}
\newcommand{\al}{\alpha}
\newcommand{\la}{\lambda}
\newcommand{\vf}{\varphi}
\newcommand{\om}{\omega}
\newcommand{\cF}{\mathcal F}
\newcommand{\cI}{\mathcal I}
\newcommand{\cT}{\mathcal T}
\newcommand{\bD}{\mathbb D}
\newcommand{\bE}{\mathbb E}
\newcommand{\bI}{\mathbb I}
\newcommand{\bR}{\mathbb R}
\newcommand{\bV}{\mathbb V}
\newtheorem{theorem}{Theorem}[section]
\newtheorem{lemma}[theorem]{Lemma}
\theoremstyle{definition}
\newtheorem{remark}[theorem]{Remark}
\newtheorem{defin}[theorem]{Definition}
\numberwithin{equation}{section}
\newcommand{\dif}{\mathop{}\!\mathrm{d}} % \mathop produces two thin spaces, \! removes the trailing one
\newtheorem{conj}[equation]{Conjecture}
\newcounter{vremennyj}
\begin{document}

\title[Multi-parameter embeddings for $p\neq 2$]%
{Multi-parameter Carleson embeddings\\ for $p\neq 2$ on $T^2$ or for $p=2$ on $T^4$ and why the proofs fail}

%\author[N.~Arcozzi]{Nicola Arcozzi}
%\address[N.~Arcozzi]{Universit\`{a} di Bologna, Department of Mathematics, Piazza di Porta S. Donato, 40126 Bologna (BO)}
%\email{nicola.arcozzi@unibo.it}
%\thanks{NA is partially  \text{supp}orted by the grants INDAM-GNAMPA 2017 ``Operatori e disuguaglianze integrali in spazi con simmetrie'' and PRIN 2018 ``Variet\`{a} reali e complesse: geometria, topologia e analisi armonica''}
%\author[I.~ Holmes]{Irina Holmes}
%\thanks{IH is partially  \text{supp}orted by the NSF an NSF Postdoc under Award No.1606270}
%\address{Department of Mathematics, Michigan Sate University, East Lansing, MI. 48823}
\author[P.~Mozolyako]{Pavel Mozolyako}
\thanks{PM is  \text{supp}orted by the Russian Science Foundation grant 17-11-01064}
\address[P.~Mozolyako]{St. Petersburg State University, St. Petersburg, Russia}
\email{pmzlcroak@gmail.com}
\author[G.~Psaromiligkos]{Georgios Psaromiligkos}
\address[G.~Psaromiligkos]{Department of Mathematics, Michigan Sate University, East Lansing, MI. 48823}
\email{psaromil@math.msu.edu}
\author[A.~Volberg]{Alexander Volberg}
\thanks{AV is partially  \text{supp}orted by the NSF grant DMS 1900268 and by Alexander von Humboldt foundation}
\address[A.~Volberg]{Department of Mathematics, Michigan Sate University, East Lansing, MI. 48823 and Hausdorff Center, Universit\"at Bonn}
\email{volberg@math.msu.edu}
%\author[P.~Zorin-Kranich]{Pavel Zorin-Kranich}
%\thanks{PZ was partially  \text{supp}orted by the Hausdorff Center for Mathematics (DFG EXC 2047)}
%\address[P.~Zorin-Kranich]{Mathematical Institute, University of Bonn, Bonn, Germany}
%\email{pzorin@uni-bonn.de}
\subjclass[2010]{42B20, 42B35, 47A30}
% 42B	Harmonic analysis in several variables
% 42B20	Singular and oscillatory integrals (Calder?on-Zygmund, etc.)
% 42B35	Function spaces arising in harmonic analysis
% 47A	General theory of linear operators
% 47A30	Norms (inequalities, more than one norm, etc.)
%{30E20, 47B37, 47B40, 30D55.}
%
% 30D55	$H^p$-classes (1980-2009)
% 30E20	Integration, integrals of Cauchy type, integral representations of analytic functions
%
% 47B   	Special classes of linear operators
% 47B37	Operators on special spaces (weighted shifts, operators on sequence spaces, etc.)
% 47B40	Spectral operators, decomposable operators, well-bounded operators, etc.
\begin{abstract}
This note contains a plethora of counterexamples to attempts to generalize the results of bi-parameter embedding from $p=2$ case to either $p>2$ or $p<2$. This is in striking juxtaposition to $p=2$  case that was fully understood in the series of papers \cite{AMPS},   \cite{AMPVZ-K},  \cite{MPVZ1}, \cite{MPVZ2}, \cite{AHMV}, \cite{MPV}.   We also build a counterexample to small energy majorization on bi-tree. This counterexample shows that straightforward generalizations of methods of \cite{AMPVZ-K}, \cite{MPVZ1}, \cite{MPVZ2}, \cite{AHMV} from $2$-tree $T^2$ or $3$-tree $T^3$ to  $4$-tree $T^4$ will not work even for $p=2$ unless some new approach is invented.
\end{abstract}
\maketitle
\section{Introduction}
Embedding theorems on graphs are interesting in particular because they are related to the structure of spaces of holomorphic functions. For Dirichlet space on a disc  this fact has been explored in \cite{ARSW}
\cite{ArcoRS2002}
\cite{ARSW11}, and for Dirichlet space on bi-disc in \cite{AMPS}, \cite{AMPVZ-K}, \cite{AHMV}. Bi-disc case is much harder as the corresponding graph has cycles. One particular interesting case see in \cite{Saw1} (a small piece of bi-tree is considered).

The difference between one parameter theory (graph is a tree) and two parameter theory (graph is a bi-tree) is huge.
One explanation is that in a multi-parameter theory all the notions of singular integrals, para-products, BMO, Hardy classes etc become much more subtle than in one parameter  settings. There are many examples of this effect.  It was demonstrated in results of S.Y. A. Chang, R. Fefferman and L. Carleson, see \cite{Carleson}, \cite{Chang}, \cite{ChF1},\cite{RF1}, \cite{TaoCar}.

Another difference between one- and two-parameter embeddings is that in one parameter case the results for $L^p$ are the same as for $L^2$. This seems not to be the case for the two parameter theory.

\section{Basic lemmas that underpin $p=2$ case}
\label{basic}

We know that $d$-parameter Carleson embedding theorem is completely understood when $d=1$ and $1<p\le \infty$ (Carleson, Sawyer), and for $p=2$, $d=2,3$, see \cite{AMPVZ-K},  \cite{MPVZ1}, \cite{MPVZ2}, \cite{AHMV}, \cite{MPV}. The cases 1) $d=2$, $p\neq 2$, 2) $d=4$, $p=2$ are the simplest open cases. 

Here we demonstrate the difficulties in understanding those simplest open cases by building a plethora of counterexamples to key lemmas ``generalized'' to those cases.

We start by listing and proving the key  lemmas from \cite{AMPVZ-K} that underpin the main bi-parameter embedding result of this and subsequent papers. Then we show why they break down for $p\neq 2$ case.

In Section \ref{fg-cex} we come back to $p=2$ case but for $4$-parameter embedding. And we write a counterexample to a statement that would be one of the possible tools to crack the  $4$-parameter case.

All this does not mean that we have counterexamples to natural statements. Below we only have counterexamples to ``natural proofs''.

\begin{defin}
\label{def:superadditive}
Given a finite tree $T$, the set of \emph{children} of a vertex $\beta\in T$ consists of the maximal elements of $T$ that are strictly smaller than $\beta$:
\[
\ch \beta := \{\max ( \beta' \in T \colon \beta' < \beta)\}\,.
\]
A function $g : T \to [0,\infty)$ is called \emph{superadditive} if for every $\beta \in T$ we have
\[
g(\beta) \geq \sum_{\beta' \in \ch(\beta)} g(\beta').
\]
\end{defin}

\begin{lemma}
\label{lem:supadditive-l1linf}
Let $T'$ be a finite tree and $g,h : T' \to [0,\infty)$.
Assume that $g$ is superadditive and $Ih \leq \lambda$ on $\ \text{supp} g$.
Then for every $\beta \in T'$ we have
\[
\sum_{\alpha \leq \beta} g(\alpha) h(\alpha)
\leq
\lambda g(\beta).
\]
\end{lemma}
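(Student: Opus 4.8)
The plan is to establish the bound directly, via a summation-by-parts identity on the subtree $T_\beta := \{\alpha \in T' : \alpha \le \beta\}$ that converts the hypothesis $Ih \le \lambda$ on $\text{supp}\,g$ and the superadditivity of $g$ into a single telescoping sum; no case analysis on $\beta$ will be needed. Throughout, write $p(\alpha)$ for the parent of a vertex $\alpha \neq \beta$ in $T_\beta$, and set $I_\beta h(\alpha) := \sum_{\alpha \le \gamma \le \beta} h(\gamma)$, the integration truncated to $T_\beta$. Since $T_\beta$ is downward closed, children in $T_\beta$ coincide with children in $T'$, and clearly $0 \le I_\beta h(\alpha) \le Ih(\alpha)$, so $I_\beta h(\alpha) \le \lambda$ whenever $\alpha \in \text{supp}\,g$.

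The algebraic heart is the identity
\[
\sum_{\alpha \le \beta} g(\alpha)\,h(\alpha) \;=\; \sum_{\alpha \le \beta} I_\beta h(\alpha)\,\Bigl(g(\alpha) - \sum_{\alpha' \in \ch(\alpha)} g(\alpha')\Bigr).
\]
To get it, note $I_\beta h(\alpha) = h(\alpha) + I_\beta h(p(\alpha))$ for $\alpha < \beta$ (split the chain from $\alpha$ to $\beta$ at its first step) and $I_\beta h(\beta) = h(\beta)$; substituting $h(\alpha) = I_\beta h(\alpha) - I_\beta h(p(\alpha))$ into the left-hand side and reindexing the subtracted terms by the parent map $\alpha \mapsto p(\alpha)$ — whose fiber over $\gamma \le \beta$ is exactly $\ch(\gamma)$ — regroups everything as claimed.

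Now superadditivity makes each coefficient $g(\alpha) - \sum_{\alpha' \in \ch(\alpha)} g(\alpha') \ge 0$; moreover, if $\alpha \notin \text{supp}\,g$ then $g(\alpha) = 0$ forces $\sum_{\alpha' \in \ch(\alpha)} g(\alpha') = 0$ as well, so that coefficient vanishes and only vertices of $\text{supp}\,g$ contribute. On those $I_\beta h(\alpha) \le \lambda$, and since every summand is nonnegative we may bound
\[
\sum_{\alpha \le \beta} g(\alpha)\,h(\alpha) \;\le\; \lambda \sum_{\alpha \le \beta}\Bigl(g(\alpha) - \sum_{\alpha' \in \ch(\alpha)} g(\alpha')\Bigr),
\]
and the last sum telescopes: each $g(\alpha')$ with $\alpha' \le \beta$, $\alpha' \neq \beta$, is counted once with sign $+$ (as its own term) and once with sign $-$ (inside the child-sum of $p(\alpha') \le \beta$), leaving precisely $g(\beta)$. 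This yields $\sum_{\alpha \le \beta} g(\alpha) h(\alpha) \le \lambda g(\beta)$.

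I expect the only genuinely fiddly part to be the bookkeeping in the reindexing and the telescoping — keeping straight the roles of leaves, of the apex $\beta$, and of vertices outside $\text{supp}\,g$ — since the inequalities themselves are one-line consequences once the identity is in place. The same argument can also be packaged as an induction on $|T_\beta|$: peel off $g(\beta)h(\beta)$, apply the inductive hypothesis to each child subtree $T_{\beta_i}$ with level $\lambda - Ih(\beta)$ (nonnegative once one disposes of the trivial case $g(\beta)=0$, and valid because restricting $I$ to $T_{\beta_i}$ subtracts exactly the constant $Ih(\beta)$), and recombine using $\sum_i g(\beta_i) \le g(\beta)$ together with $Ih(\beta) \ge h(\beta)$; but the direct identity above is cleaner and makes transparent that superadditivity is precisely the hypothesis that is called for.
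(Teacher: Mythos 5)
Your proof is correct, and it takes a genuinely different (though closely related) route from the paper's. The paper argues by induction on the depth of the tree: it peels off the apex $\beta$, applies the inductive hypothesis on each child subtree to get the bound $g(\beta')\sup_{\alpha\le\beta'}\sum_{\alpha\le\alpha'\le\beta'}h(\alpha')$, and then uses superadditivity once at $\beta$ to recombine, proving the slightly sharper statement $\sum_{\alpha\le\beta}g(\alpha)h(\alpha)\le g(\beta)\sup_{\alpha\le\beta}\sum_{\alpha\le\alpha'\le\beta}h(\alpha')$. You instead write a single global Abel-summation identity,
\[
\sum_{\alpha \le \beta} g(\alpha)h(\alpha) = \sum_{\alpha \le \beta} I_\beta h(\alpha)\Bigl(g(\alpha) - \sum_{\alpha' \in \ch(\alpha)} g(\alpha')\Bigr),
\]
and then bound termwise; the verification of the identity, the nonnegativity and vanishing of the coefficients off $\operatorname{supp} g$, and the final telescoping are all correct (including the needed observations that $T_\beta$ is downward closed, that $I_\beta h\le Ih$ since $h\ge 0$, and that $g=0$ at a vertex forces $g=0$ on its children by superadditivity). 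What your version buys is transparency: superadditivity enters exactly as nonnegativity of the discrete ``defect'' measure $g(\alpha)-\sum_{\alpha'\in\ch(\alpha)}g(\alpha')$, whose total mass on $T_\beta$ is $g(\beta)$, so the lemma reads as ``integrate the bounded function $I_\beta h$ against a positive measure of mass $g(\beta)$ supported where $I_\beta h\le\lambda$.'' The paper's induction is shorter to state but hides this structure (and its opening reduction to $T'=\operatorname{supp} g$ is left implicit, whereas you handle vertices outside $\operatorname{supp} g$ explicitly). Your closing remark that the argument repackages as an induction with level $\lambda-Ih(\beta)$ is also accurate and essentially reproduces the paper's scheme.
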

\begin{proof}
Without loss of generality we may consider the case when $\beta$ is the unique maximal element of $T'$ and $T' = \ \text{supp} g$.
We induct on the depth of the tree.
Let $T'$ be given and  \text{supp}ose that the claim is known for all its branches.
Then by the inductive hypothesis and superadditivity we have
\begin{align*}
\sum_{\alpha \leq \beta} g(\alpha) h(\alpha)
&=
g(\beta) h(\beta) + \sum_{\beta' \in \ch(\beta)} \sum_{\alpha \leq \beta'} g(\alpha) h(\alpha)
\\ &\leq
g(\beta) h(\beta) + \sum_{\beta' \in \ch(\beta)} g(\beta') \sup_{\alpha \leq \beta'} \sum_{\alpha \leq \alpha' \leq \beta'} h(\alpha')
\\ &\leq
g(\beta) h(\beta) + \sum_{\beta' \in \ch(\beta)} g(\beta') \sup_{\alpha < \beta} \sum_{\alpha \leq \alpha' < \beta} h(\alpha')
\\ &\leq
g(\beta) h(\beta) + g(\beta) \sup_{\alpha < \beta} \sum_{\alpha \leq \alpha' < \beta} h(\alpha')
\\ &=
g(\beta) \sup_{\alpha \leq \beta} \sum_{\alpha \leq \alpha' \leq \beta} h(\alpha').
\qedhere
\end{align*}
\end{proof}

\begin{lemma}
\label{lem:I2-positive}
Let $I$ be an integral operator with a positive kernel and $f,g$ positive functions.
Then
\[
\int (If)^{2} g \leq \Bigl( \sup_{\ \text{supp} g} II^{*}g \Bigr) \int f^{2}.
\]
\end{lemma}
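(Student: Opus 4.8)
The statement is a weighted Schur test in disguise, so the plan is to run that test by hand, with two applications of Cauchy--Schwarz that close the estimate on itself. Set $u := If$, which is nonnegative because the kernel $K$ of $I$ and the function $f$ are, put $A := \sup_{\ \text{supp} g} II^{*}g$, and abbreviate $X := \int (If)^{2}g = \int g u^{2}$. We may assume $A<\infty$ and $\int f^{2}<\infty$ (otherwise there is nothing to prove), and that $X<\infty$; the last is automatic in the finite-tree applications of this paper, and in general follows from a routine truncation of the kernel together with monotone convergence.

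First I would introduce the auxiliary function $h := g\,u\ge 0$, which is supported on $\ \text{supp} g$, and unwind the definition of the adjoint:
\[
X \;=\; \int u\,(g u)\;=\;\langle If,\,h\rangle\;=\;\langle f,\,I^{*}h\rangle\;\le\;\Bigl(\int f^{2}\Bigr)^{1/2}\,\|I^{*}h\|_{2}.
\]
Next I would expand the remaining norm as a quadratic form, $\|I^{*}h\|_{2}^{2}=\langle h,\,II^{*}h\rangle=\iint L(x,y)\,h(x)h(y)\dif x\dif y$, where $L(x,y)=\int K(x,z)K(y,z)\dif z\ge 0$ is the symmetric kernel of $II^{*}$. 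Since $h=gu$, this equals $\iint L(x,y)\,g(x)g(y)\,u(x)u(y)\dif x\dif y$; applying Cauchy--Schwarz in the positive symmetric measure $\dif\mu(x,y):=L(x,y)g(x)g(y)\dif x\dif y$ and using the symmetry of $\mu$ to turn $u(x)u(y)$ into $u(x)^{2}$ gives
\[
\|I^{*}h\|_{2}^{2}\;\le\;\iint u(x)^{2}\dif\mu(x,y)\;=\;\int u^{2}\,g\,(II^{*}g)\;\le\;A\int g u^{2}\;=\;A\,X,
\]
where the last inequality uses $(II^{*}g)(x)=\int L(x,y)g(y)\dif y\le A$ on $\ \text{supp} g\supseteq\ \text{supp}(gu^{2})$.

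Combining the two displays yields $X\le(\int f^{2})^{1/2}(AX)^{1/2}$, hence $X^{2}\le A\,X\int f^{2}$; the case $X=0$ is trivial, and otherwise dividing by $X$ gives $X\le A\int f^{2}$, which is the asserted bound. I expect the only genuinely delicate point to be this self-improvement step, which needs the a priori finiteness of $X$; it is harmless in the setting of the paper (finite trees), and in general is handled by the standard exhaustion/truncation argument. Everything else is bookkeeping: positivity of the kernel enters only to make $u$, $h$, and the measure $\mu$ nonnegative, and the symmetry of the kernel $L$ of $II^{*}$ is exactly what lets the second Cauchy--Schwarz collapse to a single factor.
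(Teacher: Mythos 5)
Your proof is correct and follows essentially the same route as the paper: dualize to get $X\le\|f\|_2\|I^{*}(gIf)\|_2$, expand the norm as a symmetric quadratic form, and symmetrize (your Cauchy--Schwarz in the measure $\dif\mu$ is the same step as the paper's $u(x)u(x')\le\frac12(u(x)^2+u(x')^2)$ trick) to bound it by $A\,X$, then rearrange. Your remark about needing the a priori finiteness of $X$ for the final division is a fair point that the paper glosses over, but it is harmless in the finite-tree setting where the lemma is used.
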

\begin{proof}
Without loss of generality $f$ is positive.
By duality we have
\[
\int (If)^{2} g
=
\int f I^{*}(If \cdot g)
\leq
\|f\|_{2} \| I^{*}(If \cdot g)\|_{2}.
\]
By the hypothesis $Ih(x) = \int K(x,y) h(y)$ with a positive kernel $K$.
Hence
\begin{align*}
\|I^{*}(If \cdot g)\|_{2}^{2}
&=
\int I^{*}(If \cdot g) I^{*}(If \cdot g)
\\ &=
\int K(x,y) ((If)(x) g(x)) K(x',y) ((If)(x') g(x')) \dif(x,x',y)
\\ &\leq
\int \frac12 (If(x)^{2}+If(x')^{2}) K(x,y) (g(x)) K(x',y) (g(x')) \dif(x,x',y)
\\ &=
\frac12 \int I^{*}((If)^{2} \cdot g) I^{*}(g) + \int I^{*}(g) I^{*}((If)^{2} \cdot g)
\\ &=
\int (II^{*}g) \cdot (If)^{2} \cdot g
\\ &\leq
\Bigl( \sup_{\ \text{supp} g} II^{*}g \Bigr) \int (If)^{2} \cdot g.
\end{align*}
Substituting the second displayed estimate into the first we obtain
\[
\int (If)^{2} g
\leq
\|f\|_{2} \Bigl( \sup_{\ \text{supp} g} II^{*}g \Bigr)^{1/2} \Bigl( \int (If)^{2} \cdot g \Bigr)^{1/2}.
\]
The conclusion follows by rearranging the terms.
\end{proof}

\begin{lemma} 
\label{Phi}
Let $g,f,w : T \to [0,\infty)$ be positive functions and $\lambda,\delta>0$.
Assume that $g$ is superadditive and $I (wg) \leq \delta$ on $\ \text{supp} f$.
Then there exists a positive function $\phi : T \to [0,\infty)$ such that
\begin{equation} 
\label{geIf}
I (w\phi) \gtrsim I(wf) \text{ on } \{ \lambda/2 < I(wg) \leq 2\lambda\},
\end{equation}
and
\begin{equation} \label{enest1}
\int w \phi^{2} \lesssim \frac{\delta}{\lambda} \int w f^{2}.
\end{equation}
\end{lemma}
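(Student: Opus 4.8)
The plan is to build $\phi$ by redistributing the mass $wf$ downward along $T$ and spreading it out proportionally to $w$. A reduction first: if $\delta\ge\lambda/4$ take $\phi=\tfrac12 f$, so that $I(w\phi)=\tfrac12 I(wf)\gtrsim I(wf)$ everywhere and $\int w\phi^{2}=\tfrac14\int wf^{2}\le\tfrac{\delta}{\lambda}\int wf^{2}$; so assume $\delta<\lambda/4$. Observe that $I(wg)$ is non-increasing from child to parent, because $I(wg)(\al)=w(\al)g(\al)+I(wg)(\text{parent}(\al))$. Hence the band $F:=\{\lambda/2<I(wg)\le 2\lambda\}$ is disjoint from $\{I(wg)\le\delta\}\supseteq\operatorname{supp} f$, and no vertex of $\operatorname{supp} f$ lies below a vertex of $F$; so for $\al\in F$ the flux $I(wf)(\al)=\sum_{\al'\ge\al}w(\al')f(\al')$ is exactly the $wf$-mass lying above $\al$ on its geodesic to the root, inside $\{I(wg)\le\delta\}$.

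Now decompose: let $\mathcal V$ be the maximal vertices of the down-set $\{I(wg)>\lambda/2\}$; the subtrees $T_{v}:=\{u:u\le v\}$ ($v\in\mathcal V$) are pairwise disjoint and cover $\{I(wg)>\lambda/2\}\supseteq F$. Since $f\equiv 0$ on $T_{v}$, the flux $I(wf)$ is constant on $T_{v}$, equal to $M_{v}:=I(wf)(v)=\sum_{u\ge\gamma_{v}}w(u)f(u)$, where $\gamma_{v}$ is the lowest ancestor of $v$ with $I(wg)(\gamma_{v})\le\delta$ (if there is none, $M_{v}=0$ and $v$ is irrelevant). Put $\phi:=\sum_{v}\phi_{v}$ with $\phi_{v}:=\tfrac{M_{v}}{W_{v}}\1_{S_{v}}$, where $S_{v}:=\{u:v\le u\le\gamma_{v}\}$ is the geodesic segment and $W_{v}:=\sum_{u\in S_{v}}w(u)$. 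The majorization \eqref{geIf} is then rigorous: for $\al\in F\cap T_{v}$ the geodesic $\{u:u\ge\al\}$ contains all of $S_{v}$, so $I(w\phi)(\al)\ge\sum_{u\in S_{v}}w(u)\phi_{v}(u)=M_{v}=I(wf)(\al)$; only positivity of $I$ and the monotonicity above are used.

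For the energy \eqref{enest1} the decisive point — where superadditivity of $g$ enters — is the $w$-mass gain
\[
W_{v}\;\gtrsim\;\tfrac{\lambda}{\delta}\sum_{u\ge\gamma_{v}}w(u).
\]
Indeed, superadditivity forces $g$ non-increasing from parent to child, so $g\le g(\gamma_{v})$ on $S_{v}$, while $g(u)\ge g(\gamma_{v})$ for $u\ge\gamma_{v}$ gives $\delta\ge I(wg)(\gamma_{v})\ge g(\gamma_{v})\sum_{u\ge\gamma_{v}}w(u)$; combined with $\sum_{u\in S_{v}}w(u)g(u)=I(wg)(v)-I(wg)(\text{parent}(\gamma_{v}))\ge\lambda/2-\delta\ge\lambda/4$ this yields $W_{v}\,g(\gamma_{v})\ge\lambda/4$, hence the claim. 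Together with Cauchy--Schwarz on the geodesic $\{u:u\ge\gamma_{v}\}$, namely $M_{v}^{2}\le\bigl(\sum_{u\ge\gamma_{v}}w(u)\bigr)\bigl(\sum_{u\ge\gamma_{v}}w(u)f(u)^{2}\bigr)$, this gives the clean per-segment bound $\int w\phi_{v}^{2}=M_{v}^{2}/W_{v}\lesssim\tfrac{\delta}{\lambda}\sum_{u\ge\gamma_{v}}w(u)f(u)^{2}$.

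The remaining step is to sum over $v\in\mathcal V$, and I expect it to be the real obstacle. The segments $S_{v}$ need not be disjoint, and worse, the right-hand sides above run over the overlapping geodesics $\{u:u\ge\gamma_{v}\}$, so adding them naively overcounts each unit of $\int wf^{2}$ — on a binary tree by an unbounded factor. Repairing this means replacing the above $\phi$ by one in which the injected mass is correctly apportioned among competing stopping vertices, so that the flux $M_{v}$ is effectively ``split'' while the majorization on $F$ survives because $I(w\phi)(\al)$ accumulates contributions along all of $\{u:u\ge\al\}$; equivalently, one organizes the energy estimate as a stopping-time induction in which each vertex of $\operatorname{supp} f$ is charged a bounded number of times. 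At each stage superadditivity of $g$ is invoked exactly as above, certifying that spreading the injected mass over $S_{v}$ rather than concentrating it costs only the factor $\delta/\lambda$; this is the heart of the matter.
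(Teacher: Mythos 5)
Your local estimates are correct --- the majorization on each $T_v$ is fine, and the per-segment bound $\int w\phi_v^2=M_v^2/W_v\lesssim\frac{\delta}{\lambda}\sum_{u\ge\gamma_v}w(u)f(u)^2$ does follow from superadditivity plus Cauchy--Schwarz exactly as you say --- but the proof is not complete, and the step you defer is not a technicality: it is, as you yourself put it, the heart of the matter, and for the $\phi$ you actually wrote down the energy estimate \eqref{enest1} is simply false. The failure mode is concrete. Take $w\equiv1$, $f=\one_{\{o\}}$, $g(o)=\delta$, split $g$ evenly ($g=\delta2^{-k}$ on each vertex of depth $k$) down to depth $m$, and then continue each of the $2^m$ branches straight down with $g$ constant equal to $\delta2^{-m}$ for two more generations; with $\lambda=4\delta$ each branch produces a stopping vertex $v$ at depth $m+2$ with $\gamma_v=o$, $M_v=1$ and $W_v=m+3$. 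Your bound sums $\sum_{u\ge\gamma_v}wf^2=1$ once per stopping vertex, i.e.\ $2^m$ times, and the true energy is even worse: the segments $S_v$ all contain $o$, so $\phi(o)=2^m/(m+3)$ and $\int w\phi^2\ge 4^m/(m+3)^2$, against $\frac{\delta}{\lambda}\int wf^2=\frac14$. Spreading the injected mass uniformly in $w$ along each geodesic double-counts across sibling branches, and the over-count is the number of stopping vertices below a given point of $\operatorname{supp}f$, which is unbounded.

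The paper's proof is precisely the ``correct apportionment'' you gesture at, and it is done in one global formula rather than by a stopping-time repair: $\phi:=\frac{1}{\lambda}\one_{\{\delta<I(wg)\le2\lambda\}}\,I(wf)\,g$, i.e.\ the mass deposited on the band is weighted by $g$ itself. Superadditivity then splits the deposit among sibling branches with no loss (in the example above this $\phi$ decays like $2^{-k}$ across $2^k$ vertices and the energy converges). The majorization becomes a telescoping identity, $I(w\phi)(\omega)=\frac{1}{\lambda}I(wf)(\omega)\bigl(I(wg)(\omega)-I(wg)(\alpha_{\min})\bigr)\ge\frac{1}{\lambda}(\lambda/2-\delta)I(wf)(\omega)$, and the summation you could not perform is replaced by two structural lemmas: Lemma~\ref{lem:supadditive-l1linf} applied to $g\one_{I(wg)\le2\lambda}$ and $h=wg$ gives $I^*(g^2w\one_{I(wg)\le2\lambda})\le2\lambda g$, and the $L^2$ duality estimate of Lemma~\ref{lem:I2-positive}, with the operator cut to the up-set $\mathcal U=\{I(wg)\le\delta\}$ where the one-dimensional maximum principle \eqref{one-dim} gives $I(w\one_{\mathcal U}g)\le\delta$, then yields $\int w\phi^2\le\frac{1}{\lambda^2}\cdot2\lambda\cdot\delta\int wf^2$. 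If you try to fix your scheme by splitting each $M_v$ among competing stopping vertices in proportion to $g$ on the relevant branches, you will essentially rederive this $\phi$; so the honest conclusion is that your proposal identifies the right ingredients but does not contain a proof of the lemma.
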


\begin{proof}
Without loss of generality we may assume $\lambda \geq 4\delta$.
Define
\begin{equation}
\label{PhiDef}
\phi(\alpha) :=
\frac{1}{\lambda} \one_{\delta < I(wg)(\alpha) \leq 2\lambda} I(wf)(\alpha) g(\alpha)
\end{equation}

We prove first \eqref{geIf}.
Let $\omega\in T$ be such that $\lambda/2 < I(wg)(\omega) \leq 2\lambda$.
Then for every $\alpha\in T$ with $\alpha\geq\omega$ and $I(wg)(\alpha)>\delta$ we have
\[
I(wf)(\alpha) = I(wf)(\omega).
\]
It follows that
\begin{align*}
I(w\phi)(\omega)
&=
\frac{1}{\lambda} \sum_{\substack{\alpha\geq\omega :\\ \delta < I(wg)(\alpha) \leq 2\lambda}} I(wf)(\alpha) (wg)(\alpha)
\\ &=
I(wf)(\omega) \frac{1}{\lambda} \sum_{\substack{\alpha\geq\omega :\\ \delta < I(wg)(\alpha)}} (wg)(\alpha)
\\ &=
I(wf)(\omega) \frac{1}{\lambda} ( I(wg)(\omega) - I(wg)(\alpha_{min}) ),
\end{align*}
where $\alpha_{min}$ is the smallest $\alpha$ outside of the summation range if it exists (otherwise that term is omitted).
But then $I(wg)(\omega) \geq \lambda/2$ and $I(wg)(\alpha_{min}) \leq \delta$, and \eqref{geIf} follows.

Next we will prove the energy estimate \eqref{enest1}.
Let $\mathcal{U} := \{I(wg) \leq \delta\}$, so that $\mathcal{U}$ is an up-set and $f$ is  \text{supp}orted on $\mathcal{U}$.
By Lemma~\ref{lem:I2-positive} with the operator $\cI:=I \sqrt{w} \one_{\mathcal{U}}$ and functions $F:=f \sqrt{w}$ and $G:=g^{2} w \one_{I(wg)\leq 2\lambda}$ we can estimate
\begin{align*}
\int w \phi^{2}
&\leq
\frac{1}{\lambda^{2}} \sum_{\substack{\alpha :\\ I(wg)(\alpha) \leq 2\lambda}} I(wf)(\alpha)^{2} g(\alpha)^{2} w(\alpha)
\\&=
\frac{1}{\lambda^{2}}\int \cI(F)(\al)^2 G(\al)\leq^{Lemma \ref{lem:I2-positive}}\frac{1}{\lambda^{2}}\Bigl( \sup_{\ \text{supp} G} \cI\cI^{*}G \Bigr) \int F^{2}=
\\ &\leq
\frac{1}{\lambda^{2}} \Bigl( \int w f^{2} \Bigr) \sup I(w \one_{\mathcal{U}} I^{*}(g^{2} w \one_{I(wg)\leq 2\lambda})).
\end{align*}
By Lemma~\ref{lem:supadditive-l1linf} with the superadditive function $g:=g \one_{I(wg) \leq 2\lambda}$ and the function $h:=wg$ we can estimate
\[
I^{*}(g^{2} w \one_{I(wg)\leq 2\lambda}) \leq 2\lambda g.
\]
Moreover, since $\mathcal{U}$ is an up-set on a simple tree we have
\begin{equation}
\label{one-dim}
I (w \one_{\mathcal{U}} g) \leq \sup_{\mathcal{U}} I(wg) \leq \delta.
\end{equation}
Combining the last three displays we obtain the energy estimate \eqref{enest1}.
\end{proof}

\bigskip

\subsection{The ultimate technical result}
\label{ult}
Below is the lemma that is the backbone of main results of \cite{AMPVZ-K},  \cite{MPVZ1}, \cite{MPVZ2}. 
It was proved using Lemma\ref{Phi}, where one puts
$$
f^\beta(\al):= F(\al, \beta)
$$
$$
g^\beta(\al):= I_2 F(\al, \beta)=\sum_{\gamma\ge \beta} F(\al, \gamma)\,.
$$
In its turn we just saw that Lemma \ref{Phi} is a combination of  Lemma \ref{lem:I2-positive} and Lemma \ref{lem:supadditive-l1linf}.

Here is this ``backbone lemma''.
\begin{lemma}
\label{SmEMaj2}
Let $T^{2}$ be a $2$-tree and $F:T^2\to  [0, \infty)$ a function that is superadditive in each parameter separately..
Suppose that $\text{supp}\, F \subseteq \{ \bI (F) \leq \delta \}$. Let $\la \ge 4 \delta$.
Then there exists $\varphi:T^2\to  [0, \infty)$ such that
\[
a)\, \,\bI \varphi \ge \bI F, \,\, \text{where}\,\, \bI F \in [\la, 2\la],
\]
\[
b) \int_{T^2}  \varphi^2 \le C\frac{\delta^2}{\la^2} \int_{T^2}  F^2,
\]
where $C$ is an absolute constant.
\end{lemma}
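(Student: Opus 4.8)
I would try to deduce this bi‑parameter statement from the one‑parameter Lemma~\ref{Phi}, applying that lemma fiberwise and once in each of the two variables, so as to collect the gain $\delta/\la$ of Lemma~\ref{Phi} twice and land on the $(\delta/\la)^2$ of the conclusion. Write $\bI=I_1I_2$ with $I_1,I_2$ the Hardy operators in the two variables, set $E_\la:=\{\bI F\in[\la,2\la]\}$, and recall that by hypothesis $\operatorname{supp} F\subseteq\mathcal U:=\{\bI F\le\delta\}$ and $\la\ge4\delta$. The first of the two applications is the one indicated in the discussion above.

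\textbf{First reduction (in the first variable).} For each fixed $\beta\in T$ set $f^\beta(\al):=F(\al,\beta)$ and $g^\beta(\al):=I_2F(\al,\beta)=\sum_{\gamma\ge\beta}F(\al,\gamma)$, viewed as functions on the first factor tree. Since $\ch$ refers to the first variable and each $F(\cdot,\gamma)$ is superadditive there, $g^\beta$ is a sum of superadditive functions, hence superadditive; and $I_1g^\beta(\al)=\bI F(\al,\beta)$, so the support hypothesis yields $I_1g^\beta\le\delta$ on $\operatorname{supp} f^\beta$. Lemma~\ref{Phi} (weight $w\equiv1$, operator $I_1$, parameters $\delta,\la$) then gives $\phi^\beta:T\to[0,\infty)$ with $I_1\phi^\beta\gtrsim I_1F(\cdot,\beta)$ on $\{\la/2<\bI F(\cdot,\beta)\le2\la\}$ and $\int_T(\phi^\beta)^2\lesssim\tfrac{\delta}{\la}\int_TF(\cdot,\beta)^2$. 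Put $\psi(\al,\beta):=\phi^\beta(\al)$. Summing the energy bound over $\beta$ gives $\int_{T^2}\psi^2\lesssim\tfrac{\delta}{\la}\int_{T^2}F^2$, and since $E_\la\subseteq\{\la/2<\bI F\le2\la\}$ we get $I_1\psi\gtrsim I_1F$ on $E_\la$. From the explicit formula in the proof of Lemma~\ref{Phi}, $\psi=\tfrac1\la\,\one_{\{\delta<\bI F\le2\la\}}(I_1F)(I_2F)$; the essential new feature of $\psi$ (and the source of the difficulty below) is that it is now supported in $\{\bI F>\delta\}$ rather than in $\{\bI F\le\delta\}$.

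\textbf{Second reduction (in the second variable) and the main obstacle.} I would now try to run the same construction with the two variables interchanged, applied to $\psi$ in place of $F$: for fixed $\al$ take $\psi(\al,\cdot)$ as test function on the second factor tree, produce a superadditive majorant $\tilde g_\al$ with $I_2\tilde g_\al=I_1\psi(\al,\cdot)$, apply Lemma~\ref{Phi} in the second variable to obtain $\vf(\al,\cdot)$, and then assemble the fibers. If this goes through one gets $\int_T\vf(\al,\cdot)^2\lesssim\tfrac{\delta}{\la}\int_T\psi(\al,\cdot)^2$, hence $b)$ with constant $(\delta/\la)^2$ after summing in $\al$, while chaining $\bI\vf\gtrsim\bI\psi\gtrsim\bI F$ on $E_\la$ — the thresholds $\tfrac\la2<\la<2\la$ leaving enough room above $\delta\le\tfrac\la4$ — gives $a)$. \textbf{The hard part is exactly this second step.} Two things must be supplied that the first step did not need: (i) a superadditive function $\tilde g_\al$ \emph{adapted to $\psi$} whose indefinite integral is again small on $\operatorname{supp}\psi(\al,\cdot)$ — but $\psi$ lives on $\{\bI F>\delta\}$, so $\bI F$ itself no longer serves, and $\tilde g_\al$ must be manufactured from $\psi$ with its level set at height $\sim\la$ still capturing $E_\la$; and (ii) a passage from the fiberwise lower bounds to a genuine lower bound for the full $\bI\vf$, since a priori only the fibers meeting the relevant level set contribute. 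Both (i) and (ii) are precisely where one uses that each factor is a \emph{tree} and not a bi‑tree — the analogue of the one‑dimensional collapse \eqref{one-dim} in the proof of Lemma~\ref{Phi} — and it is the breakdown of this collapse for $T^4$ (or for $p\ne2$ on $T^2$) that the counterexamples of this note are built to display.
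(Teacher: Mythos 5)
Your first reduction is exactly the route the paper indicates: apply Lemma~\ref{Phi} fiberwise with $f^\beta(\al)=F(\al,\beta)$ and $g^\beta(\al)=I_2F(\al,\beta)$, which is legitimate since $g^\beta$ is superadditive in $\al$ and $I_1g^\beta=\bI F(\cdot,\beta)\le\delta$ on $\operatorname{supp}f^\beta$. Note, moreover, that this single step already yields conclusion $a)$, so you do not need the second step for the majorization: along the chain of ancestors $\beta'\ge\beta$ the quantity $\bI F(\al,\beta')$ increases as $\beta'$ descends toward $\beta$, so the $\beta'$ with $\bI F(\al,\beta')\le\la/2$ form an initial segment whose total contribution to $\bI F(\al,\beta)=\sum_{\beta'\ge\beta}I_1F(\al,\beta')$ is at most $\la/2$; summing the fiberwise bounds $I_1\phi^{\beta'}\gtrsim I_1F(\cdot,\beta')$ over the remaining $\beta'$ gives $\bI\psi(\al,\beta)\gtrsim\bI F(\al,\beta)-\la/2\gtrsim\bI F(\al,\beta)$ on $\{\la\le\bI F\le 2\la\}$.

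The genuine gap is the second factor of $\delta/\la$ in $b)$, and the second reduction you sketch cannot supply it; this is a dead end, not a technical point to be ``supplied''. To apply Lemma~\ref{Phi} in the second variable to $\psi(\al,\cdot)$ with gain $\tilde\delta/\tilde\la\sim\delta/\la$, you need a superadditive function $\tilde g_\al$ whose potential $I_2\tilde g_\al$ is at most $\tilde\delta$ on $\operatorname{supp}\psi(\al,\cdot)$ while the target set meets $\{I_2\tilde g_\al\sim\tilde\la\}$ with $\tilde\la\ge 4\tilde\delta$. But $\psi$ was built precisely so that its support reaches into $\{\bI F\sim\la\}$ (that is where $I_1\psi\gtrsim I_1F$ must hold), i.e.\ the support of $\psi(\al,\cdot)$ and the target level set are not separated in the second-variable order; concretely, the only structurally available choice $\tilde g_\al=I_1F(\al,\cdot)$ has $I_2\tilde g_\al=\bI F(\al,\cdot)\le 2\la$ on $\operatorname{supp}\psi(\al,\cdot)$, forcing $\tilde\delta\sim\tilde\la$ and a gain of $O(1)$. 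The square in the cited proofs comes from a different mechanism: the superadditivity of $F$ itself (the test function, not only the potential-generating function $g^\beta$) is exploited inside the energy estimate of the single fiberwise application --- the same upgrade that turns the gain $\delta/\la$ of Lemma~\ref{Phi} into the gain $\delta^2/\la^2$ of Theorem~\ref{d1} on a simple tree. As written, your argument proves the lemma only with $C\,\delta/\la$ in place of $C\,\delta^2/\la^2$ in $b)$.
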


\begin{remark}
Below we show that the analog of the above Lemma \ref{SmEMaj2} for $p\neq 2$ hits an obstacle: one needs to prove the analog of Lemma \ref{Phi}. If one uses our approach to prove this lemma by combination of Lemma \ref{lem:I2-positive} and Lemma \ref{lem:supadditive-l1linf}, then one of them works for $p\le 2$, and another works for $p\ge 2$.
\end{remark}

\begin{remark}
Many statement written above are true on $T^2$ instead of $T$.  For example, Lemma \ref{lem:I2-positive} does not care about where it happens. However, the statement \eqref{one-dim} is blatantly wrong on $T^2$. There is no maximum principle on $T^2$. This is the reason why \cite{AMPVZ-K} required  not only Lemmas above, but considerably more work. Notice  that Lemma \ref{Phi} is just wrong on $T^2$. The counterexample is built in Section \ref{fg-cex}.
\end{remark}

\section{Trying generalizations for $p\neq 2$. And failing}
\label{positive}

We are going to prove the next theorem on a simple finite tree $T$.

\begin{theorem}
\label{inter}
Let $f,g$ be positive functions on $T$ and numbers $0<\delta<<\lambda$ with $ \text{supp}(f)\subseteq\big\{ I g\leq \delta\big\}$. If $g$ is super-additive and $I g\leq \lambda$ on $T$ then we have for $p\geq 2$:

$$||I f\cdot g||_{\ell^p(T)}\leq C \delta^\frac{p-1}{p}\cdot\lambda^{\frac{1}{p}} ||f||_{\ell^p(T)}$$
\end{theorem}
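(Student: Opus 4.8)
The plan is to realize the estimate as an interpolation between the two endpoints $p=2$ and $p=\infty$ for a single positive linear operator. Here $Ih(\beta)=\sum_{\gamma\ge\beta}h(\gamma)$ is summation toward the root, so that $\mathcal U:=\{Ig\le\delta\}$ is an up‑set and $\operatorname{supp}f\subseteq\mathcal U$. Introduce the positive linear operator $Sh:=g\cdot I(\one_{\mathcal U}h)$, i.e. $(Sh)(\beta)=g(\beta)\sum_{\gamma\ge\beta,\ \gamma\in\mathcal U}h(\gamma)$. Since $\operatorname{supp}f\subseteq\mathcal U$ we have $Sf=g\cdot If$, so it is enough to prove the two bounds
\[
\|S\|_{\ell^2\to\ell^2}\le(\delta\lambda)^{1/2},\qquad \|S\|_{\ell^\infty\to\ell^\infty}\le\delta,
\]
because Riesz--Thorin then gives $\|S\|_{\ell^p\to\ell^p}\le\delta^{1-2/p}(\delta\lambda)^{1/p}=\delta^{(p-1)/p}\lambda^{1/p}$ for every $p\ge 2$, which is exactly the claimed inequality (indeed with $C=1$).

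For the $\ell^\infty$ endpoint, fix $\beta$ with $(Sf)(\beta)\ne 0$ and let $\omega$ be the least ancestor of $\beta$ lying in $\mathcal U$ (it exists, and $\omega=\beta$ when $\beta\in\mathcal U$). Since $\mathcal U$ is an up‑set and the ancestors of $\beta$ form a chain, $\{\gamma\ge\beta:\gamma\in\mathcal U\}=\{\gamma:\gamma\ge\omega\}$. Superadditivity makes $g$ nondecreasing toward the root, so $g(\beta)\le g(\omega)$ and $\#\{\gamma\ge\omega\}\cdot g(\omega)\le\sum_{\gamma\ge\omega}g(\gamma)=Ig(\omega)\le\delta$. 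Therefore
\[
(Sf)(\beta)=g(\beta)\sum_{\gamma\ge\omega}f(\gamma)\le g(\omega)\,\|f\|_\infty\,\#\{\gamma\ge\omega\}\le\delta\,\|f\|_\infty .
\]

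For the $\ell^2$ endpoint we reproduce the scheme of the proof of Lemma~\ref{Phi}. Applying Lemma~\ref{lem:I2-positive} to the positive‑kernel operator $\cI:=I\one_{\mathcal U}$ with weight $G:=g^2$ gives
\[
\|Sf\|_2^2=\int(\cI f)^2 g^2\le\Bigl(\sup\cI\cI^*(g^2)\Bigr)\int f^2 .
\]
By Lemma~\ref{lem:supadditive-l1linf} applied to the superadditive function $g$ with multiplier $h:=g$ (legitimate since $Ig\le\lambda$ on all of $T$) we get $\cI^*(g^2)(\gamma)=\one_{\mathcal U}(\gamma)\sum_{\beta\le\gamma}g(\beta)^2\le\one_{\mathcal U}(\gamma)\,\lambda g(\gamma)$, hence $\cI\cI^*(g^2)(\alpha)\le\lambda\sum_{\gamma\ge\alpha,\ \gamma\in\mathcal U}g(\gamma)=\lambda\,I(\one_{\mathcal U}g)(\alpha)$. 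Finally the one–dimensional maximum principle \eqref{one-dim} on the simple tree yields $I(\one_{\mathcal U}g)\le\sup_{\mathcal U}Ig\le\delta$, so $\cI\cI^*(g^2)\le\delta\lambda$ and $\|Sf\|_2^2\le\delta\lambda\,\|f\|_2^2$.

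The genuinely delicate step is the $\ell^2$ endpoint: it is precisely here that one must combine Lemma~\ref{lem:I2-positive} with Lemma~\ref{lem:supadditive-l1linf} and, crucially, invoke the maximum principle \eqref{one-dim}, which is available only because $T$ is a single simple tree — on $T^2$ this ingredient fails, which is the obstruction recorded in the remarks above and analyzed in Section~\ref{fg-cex}. The $\ell^\infty$ endpoint is short but rests on the monotonicity built into superadditivity, and the concluding Riesz--Thorin interpolation is routine.
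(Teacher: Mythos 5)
Your proposal is correct and follows essentially the same route as the paper: interpolate between the $p=2$ endpoint (obtained exactly as in the proof of Lemma~\ref{Phi} by combining Lemma~\ref{lem:I2-positive}, Lemma~\ref{lem:supadditive-l1linf}, and the one-dimensional maximum principle \eqref{one-dim}) and a short $p=\infty$ endpoint resting on the monotonicity of a superadditive $g$. Your only refinement over the paper's write-up is to name the single linear operator $S:=g\cdot I(\one_{\mathcal U}\,\cdot\,)$ so that the Riesz--Thorin step is formally clean, which is a welcome clarification but not a different method.
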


\begin{proof}

The case $p=2$ follows by using Lemma \ref{Phi} above (that is using two lemmata of \cite{AMPVZ-K}). %First, we use  Lemma 2.2 where we take $h=g$. Then, we use Lemma 2.3 where we set $g\rightarrow g^2$. An inspection of the proof (see section \ref{another} for details) shows that the supremum on Lemma 2.3 can be taken over $ \text{supp}(f)\cap\ \text{supp}(g)$ which gives the required $\delta$. 

We will prove the case $p=\infty$ and hence the theorem follows by interpolation.

The inequality we want to prove becomes

$$||I f\cdot g||_{\ell^{\infty}(T)}\leq C \delta\cdot ||f||_{\ell^{\infty}(T)}$$

We assume the tree $T$ is finite and hence the supremum is achieved on the left side. Without loss of generality we can assume its achieved for some $\beta\in \text{ \text{supp}} f$: Suppose that is achieved for some $v$, not necessarily in $\text{ \text{supp}} f$. Then let $\beta$ be the smallest ancestor of $v$ such that $f(\beta)\neq 0$. Obviously $I f(v)=I f(\beta)$ and hence:

$$I f(v)g(v)=I f(\beta)g(v)\leq I f(\beta)g(\beta)$$
as $g$ is non-decreasing. By construction, $$||I f\cdot g||_{\ell^{\infty}(T)}=I f(\beta)g(\beta)$$ and $\beta\in \text{ \text{supp}} f$. Suppose this $\beta$ is a descendant of the root of the $n$-th generation. Then,

$$I f(\beta)g(\beta)\leq ||f||_{\ell^{\infty}(T)}\cdot n\cdot g(\beta) \leq ||f||_{\ell^{\infty}(T)}\cdot I g(\beta) $$ using again that $g$ is non-decreasing. But since $\beta \in \text{ \text{supp}} f$ we have by assumption that $I g(\beta)\leq \delta$, and so the inequality follows.

\end{proof}

\begin{remark}
By repeating the main arguments in the above proof we can get the following:
\end{remark}

\begin{lemma}\label{infinity}
Let $g$ be a positive increasing function. Then for any positive function $f$:

$$\|I f\cdot g\|_{\ell^{\infty}}\leq \sup\limits_{\text{ \text{supp}}(g)\cap \text{ \text{supp}}(f)}\big(I I ^*g\big)\cdot\|f\|_{\ell^{\infty}}$$

\end{lemma}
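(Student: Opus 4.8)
The plan is to mimic the proof of Theorem~\ref{inter} in the case $p=\infty$, but replacing the crude bound $I f(\beta) \le \|f\|_{\ell^\infty} \cdot n$ (valid only because on a simple tree the value of $I f$ at generation $n$ is a sum of at most $n$ terms each bounded by $\|f\|_\infty$) by the sharp self-improving bound coming from Lemma~\ref{lem:I2-positive}. Concretely, fix a finite tree $T$ and reduce, exactly as in the previous proof, to the case where $\|I f\cdot g\|_{\ell^\infty}$ is attained at some $\beta$ with $\beta\in\text{supp}(f)$: if the supremum is attained at $v$, let $\beta$ be the smallest ancestor of $v$ with $f(\beta)\neq 0$, and use $I f(v)=I f(\beta)$ together with monotonicity of $g$ to get $I f(v) g(v) \le I f(\beta) g(\beta)$. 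Since $g(\beta)\neq 0$ as well (if $g(\beta)=0$ the left side is zero at $\beta$ — more carefully, if $g$ vanishes on the relevant chain the estimate is trivial), we may also take $\beta\in\text{supp}(g)$, so $\beta\in\text{supp}(f)\cap\text{supp}(g)$.

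Next I would bound $I f(\beta) g(\beta)$ by $(II^*g)(\beta)\cdot\|f\|_{\ell^\infty}$ directly. Write $I f(\beta) = \sum_{\alpha\le\beta} f(\alpha)$. The key observation is that for a simple tree the adjoint $I^*$ acts by $I^* h(\alpha) = \sum_{\gamma \ge \alpha} h(\gamma)$, and one checks the pointwise identity (or inequality) $g(\beta) \le \sum_{\alpha\le\beta} (I^* g)(\alpha)\big/(\text{something})$ — more precisely, since $g$ is increasing, $g(\beta) \le (I^*g)(\alpha)$ for every $\alpha\le\beta$ with $\beta\le$ the maximum of the chain, because $(I^*g)(\alpha)=\sum_{\gamma\ge\alpha}g(\gamma)\ge g(\beta)$ whenever $\alpha\le\beta$. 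Hence
\[
I f(\beta)\, g(\beta) = \sum_{\alpha\le\beta} f(\alpha)\, g(\beta) \le \sum_{\alpha\le\beta} f(\alpha)\, (I^*g)(\alpha) \le \|f\|_{\ell^\infty}\sum_{\alpha\le\beta}(I^*g)(\alpha) = \|f\|_{\ell^\infty}\, I(I^*g)(\beta),
\]
and since $\beta\in\text{supp}(f)\cap\text{supp}(g)$ this gives $\|I f\cdot g\|_{\ell^\infty} \le \sup_{\text{supp}(g)\cap\text{supp}(f)} (II^*g)\cdot\|f\|_{\ell^\infty}$, as desired.

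The step I expect to require the most care is the reduction guaranteeing that the supremum may be taken at a point of $\text{supp}(f)\cap\text{supp}(g)$ simultaneously, and the verification that $g(\beta)\le (I^*g)(\alpha)$ for all $\alpha$ in the chain below $\beta$; this uses that $g\ge 0$ and that $I^*$ sums over ancestors-or-equal in the orientation where $I$ sums over descendants-or-equal, so monotonicity of $g$ along chains is exactly what makes $g(\beta)$ one of the terms (or dominated by the partial sum) of $(I^*g)(\alpha)$. One must also handle the degenerate case where $g$ or $f$ vanishes identically on the relevant chain, in which case both sides at $\beta$ are zero and there is nothing to prove. Everything else is a direct transcription of the argument already given for Theorem~\ref{inter}, which is why the remark preceding the lemma says it follows "by repeating the main arguments."
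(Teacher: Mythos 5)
Your argument is correct and is precisely the ``repeat the main arguments of Theorem~\ref{inter}'' route the paper intends: localize the maximizer of $If\cdot g$ to a point $\beta\in\text{supp}(f)\cap\text{supp}(g)$ using monotonicity of $g$, then observe that $g(\beta)$ appears as one of the nonnegative summands of $I^*g(\alpha)$ for every $\alpha$ on the root chain of $\beta$, so that $If(\beta)g(\beta)=\sum_{\alpha\ge\beta}f(\alpha)\,g(\beta)\le\|f\|_{\ell^\infty}\sum_{\alpha\ge\beta}I^*g(\alpha)=\|f\|_{\ell^\infty}\,II^*g(\beta)$; this is an equivalent repackaging of the paper's $If(\beta)g(\beta)\le\|f\|_{\ell^\infty}\cdot n\cdot g(\beta)\le\|f\|_{\ell^\infty}\,II^*g(\beta)$.

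Two small points of hygiene. First, your order convention is flipped relative to the paper: the paper declares children to be \emph{smaller} than their parent, so the root is maximal, $If(\beta)=\sum_{\alpha\ge\beta}f(\alpha)$ and $I^*h(\alpha)=\sum_{\gamma\le\alpha}h(\gamma)$, whereas you write $If(\beta)=\sum_{\alpha\le\beta}f(\alpha)$; this is internally consistent only if one reads your $\alpha\le\beta$ as ``$\alpha$ is an ancestor of $\beta$'', which silently clashes with your own use of ``smallest ancestor'' in the reduction step. With the signs aligned to the paper's, the computation is unchanged. Second, the pointwise inequality $g(\beta)\le I^*g(\alpha)$ for ancestors $\alpha$ of $\beta$ uses only positivity of $g$, not monotonicity; monotonicity of $g$ is needed only once, in the reduction $If(v)g(v)\le If(\beta)g(\beta)$. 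Your opening remark that the bound ``comes from Lemma~\ref{lem:I2-positive}'' is a slight misattribution --- that is the $L^2$ statement, and what you actually do is give a direct pointwise $\ell^\infty$ argument, which is exactly right.
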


\subsection{Counterexample to an attempt for $p<2$}

Now we claim that Theorem \ref{inter} is not true when $1<p<2$. Of course the constant $C$ in the above theorem should be independent of the depth of the simple tree $T$. If $1<p<2$ then there is some simple tree $T$ of depth $k+2^k$ ($k\in\mathbb{N}$ to be specified) where this theorem fails.

Hence, we start with a simple tree $T$ of depth $k+2^k$ and we name $R^j_i$ the $j$-th dyadic interval of the $i$-th generation, $0\leq i\leq k+2^k$, $1\leq j\leq 2^i$. With this, $R^1_0$ is the root. Now we start constructing the functions involved in the counterexample. 

The function $g$ equal to $1$ on the root, equal to $\frac{1}{2}$ on the whole first generation, $\frac{1}{4}$ on the whole second generation and so on. We define $g$ like this up to generation $k$ and at each node of the $k+1$-th generation we give the value $\frac{1}{2^k}$ on the left child and 0 on the right. Recursively we give $g$ the value $2^{-k}$ on the left child of every node where $g$ is $2^{-k}$ and the value $0$ on the right. If at some node $g$ is 0, we put $g=0$ on both children of this node. With this construction $g$ is super-additive.

Let's make a simple observation. At generation $k$ we have $2^k$ nodes where $g$ is non-zero. Each one of these gives exactly one node in the next generation where $g$ is non-zero (=$2^{-k}$) and so on. Hence, for every $i$ with $k+1\leq i\leq 2^k+k$ we have exactly $2^k$ nodes where $g$ is non-zero and equal to $2^{-k}$.

Hence, if we want to bound $I g$ then we just have to calculate it at the boundary, i.e at a small square of side $2^{-(k+2^k)}$ where $g$ is non-zero. Such a square is a descendant of the root of order $k+2^k$ and hence we have so many values of $g$ involved. Recall that $g(R_i^j)=2^{-i}$ for $0\leq i\leq k$ and since every $\om$ has only one ancestor $\al$ in each generation, in particular for $k+1\leq i\leq k+2^k$, we get $g(\al)\leq 2^{-k}$. We have then 

$$I g(\om)\leq\sum\limits_{\al\geq\om}g(\al)=\sum\limits_{i=0}^{k}2^{-i}+\sum\limits_{i=k+1}^{k+2^k} 2^{-k}=\sum\limits_{i=0}^{k}2^{-i}+1 \leq 3$$ and from that we can choose $\lambda:=3$.

Now we proceed to define the function $f$. For a dyadic interval $R_i^j$ as above we define $f(R_i^j)=2^{-i}$ if $g(R_i^j)\neq 0$ and $f=0$ when $g=0$. This is a simple definition in order to have $\ \text{supp} (f)\subseteq \big\{I g\leq \delta\big\}$ and again we can choose $\delta:=3$ as well. We can easily then observe that $I f(R_i^j)\geq f(R_0^1)=1$ for any dyadic interval $R_i^j$ and that 

$$||f||^p_{\ell^{p}(T)}=\sum\limits_{i=0}^{k+2^k}\sum\limits_{j=1}^{2^i}f(R_i^j)^p= \sum\limits_{i=0}^{k}\sum\limits_{j=1}^{2^i}f(R_i^j)^p+\sum\limits_{i=k+1}^{k+2^k}\sum\limits_{j=1}^{2^i}f(R_i^j)^p=$$

$$ \sum\limits_{i=0}^{k}\sum\limits_{j=1}^{2^i}2^{-ip}+ \sum\limits_{i=k+1}^{k+2^k}2^{k} 2^{-ip}$$ where in the last sum only $2^k$ terms survive given the definition of $f$. Both of these  sums are finite  independently of $k$ ( as $1<p<2$), but depend on $p$. Thus $0<||f||_{\ell^{p}(T)}\leq C_p$.

Finally, let us estimate $\sum\limits_{T}(I f)^pg^p$ from below (Recall $I f(R_i^j)\geq 1$ )

$$\sum\limits_{T}(I f)^pg^p \geq \sum\limits_{i=k+1}^{k+2^k}\sum\limits_{j=1}^{2^i}\big[I f(R_i^j)g(R_i^j)\big]^p\geq \sum\limits_{i=k+1}^{k+2^k} 2^k 2^{-pk}$$ as at each one of these generations, $g$ is non-zero and equal to $2^{-k}$ exactly on $2^k$ nodes. Now the last sum is equal to $2^{(2-p)k}$. Since $1<p<2$ the power is positive (for $k$ large enough) and this is where the counterexample comes to life. In fact, if the inequality were true we would get
$$
2^{(2-p)k} \le C 3^p C_p,
$$
which is impossible for large $k$ as $p<2$.

%Suppose that the inequality raised to the power $p$ were true, for some constant $C\neq 0$.  Then choose $k$ to be such $2^{(2-p)k}>2C\cdot  3^p\cdot C_p$, where $C_p$ is as above. Then, collecting all the information we have gathered 

%$$2C\delta^{p-1}\lambda ||f||_{\ell^p(T)}  \leq 2C\cdot3^p\cdot C_p  < 2^{(2-p)k} \leq \sum\limits_{T}(I f)^p g^p \leq$$

%$$\underset{\text{if ineq holds}}{\leq} C \delta^{p-1}\lambda ||f||_{\ell^p(T)} $$ which is a contradiction as $C\neq 0$.

\section{Why not ``increasing'' instead of ``super-additive''?}

Looking at the proof of  Theorem \ref{inter} a natural question appears. When treating the case $p=\infty$ we only used that $g$ is increasing, a consequence of being super-additive. Hence, we may ask ourselves if we can replace ``super-additive'' with ``increasing''. 

Where did we use super-additivity? We used it only in Lemma \ref{Phi}, which is Lemma $2.2$ of \cite{AMPVZ-K}. It is important to get the result for $p=2$ (we need to interpolate). Let's state the lemma first (we take $g=h$ as this is what we actually want)

\begin{lemma}
\label{2.2-1}
Let $T$ be a finite tree and $g: T \to [0,\infty)$.
Assume that $g$ is super-additive and $I  g \leq \lambda$ on $\ \text{supp} \,g$.
Then for every $\gamma \in T$ we have
\begin{equation}
\label{gamma}
\sum_{\alpha \leq \gamma} g^2(\alpha)
\leq
\lambda g(\gamma).
\end{equation}
\end{lemma}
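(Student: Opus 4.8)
The plan is to observe that the statement is nothing but the special case $h := g$ of Lemma~\ref{lem:supadditive-l1linf}. Indeed, taking $h = g$ there, the hypotheses become ``$g$ is superadditive'' and ``$Ig \le \lambda$ on $\operatorname{supp} g$'', which are precisely the assumptions here; and the conclusion, read at $\beta = \gamma$, becomes $\sum_{\alpha \le \gamma} g(\alpha)h(\alpha) = \sum_{\alpha \le \gamma} g^{2}(\alpha) \le \lambda g(\gamma)$, i.e. exactly \eqref{gamma}. So, strictly speaking, no new argument is required; one only records the specialization.

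For a self-contained argument — and, more importantly, to isolate \emph{where} super-additivity is actually used, which is the whole point of this section — I would reproduce the short induction behind Lemma~\ref{lem:supadditive-l1linf}. First, without loss of generality reduce to the case where $\gamma$ is the unique maximal element of $T$ and $T = \operatorname{supp} g$; note that passing to the subtree $T_{\gamma'}$ rooted at a child $\gamma'$ of $\gamma$ only decreases $Ig$, so the bound $Ig \le \lambda$ is inherited by each branch. Then induct on the depth of $T$: split $\sum_{\alpha \le \gamma} g^{2}(\alpha) = g(\gamma)^{2} + \sum_{\gamma' \in \ch(\gamma)} \sum_{\alpha \le \gamma'} g^{2}(\alpha)$, apply the inductive hypothesis to each branch in the sharpened form $\sum_{\alpha \le \gamma'} g^{2}(\alpha) \le g(\gamma') \sup_{\alpha \le \gamma'} \sum_{\alpha \le \alpha' \le \gamma'} g(\alpha')$, bound each supremum by $\sup_{\alpha < \gamma} \sum_{\alpha \le \alpha' < \gamma} g(\alpha')$ (each $\gamma'$ is $< \gamma$), and then collapse $\sum_{\gamma' \in \ch(\gamma)} g(\gamma') \le g(\gamma)$. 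Combining gives $\sum_{\alpha \le \gamma} g^{2}(\alpha) \le g(\gamma)\bigl(g(\gamma) + \sup_{\alpha < \gamma}\sum_{\alpha \le \alpha' < \gamma} g(\alpha')\bigr) = g(\gamma)\sup_{\alpha \le \gamma} \sum_{\alpha \le \alpha' \le \gamma} g(\alpha')$, and under the reduction $T = \operatorname{supp} g$ with $\gamma$ the root this last supremum equals $\sup_{\operatorname{supp} g} Ig \le \lambda$, which finishes the estimate.

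There is no genuine obstacle here: the lemma is a corollary of one already proved. The content worth flagging is the bookkeeping observation that super-additivity is invoked at a \emph{single} pinpointed step — the bound $\sum_{\gamma' \in \ch(\gamma)} g(\gamma') \le g(\gamma)$ — while everything else in the induction only uses that $g$ is non-decreasing. This is exactly the step that fails if ``super-additive'' is weakened to ``increasing'', and it is this gap that the counterexamples in the remainder of the section exploit.
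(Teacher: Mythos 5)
Your proposal is correct and takes exactly the paper's route: Lemma~\ref{2.2-1} is just Lemma~\ref{lem:supadditive-l1linf} specialized to $h=g$, which is all the paper itself invokes. Your self-contained rerun of the induction matches the paper's proof of that lemma step for step, and your observation that super-additivity enters only through $\sum_{\gamma'\in\ch(\gamma)} g(\gamma')\le g(\gamma)$ is precisely the point the subsequent counterexample exploits.
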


This lemma is true. Now, the question is, can we replace super-additivity with increasing on this lemma? The answer is no, and we will construct a function $g$ which is \textbf{increasing, and strictly sub-additive} but such that \eqref{gamma} fails.

Let $N\in\mathbb{N}$ and $T=T_N$ be a finite dyadic tree of depth $N$. We construct the function $g$ by the following rule: $g(I_0)=1$ and for every dyadic interval $I$ and its children $I_{-}, I_{+}$ we have $g(I_{-})=\frac{g(I)}{2}$ and $g( I_{+}) =g(I)$. This function is increasing and also strictly sub-additive as $g(I_{-})+g( I_{+}) =\frac{g(I)}{2}+g(I)>g(I)$.

Let's note several things. First of all, we can choose $\lambda=N$ as the maximum value $I  g$ can take is $N$. To see this, let $\om$ be the right-most boundary point as we look at the base of tree $T$. On the set $\{\al: \al\geq \om \}$ the function $g$ is equivalent to $1$ by construction. Hence $I g(\om)=N$ and there is no other path giving a bigger value than this (on other paths $g$ takes a value $<1$ at least on one node). 
%Also, for $\gamma$ we will choose $I_0$.

Second, lets name the dyadic intervals in this way: $I_{1,1}=I_0$ and $I_{i,j}$ is the $j$-th dyadic interval (we enumerate from the left to the right as we look at the tree) of the $i$-th generation ( $1\leq i \leq N$ and $1\leq j \leq 2^{i-1}$). 

Now, in the $i+1$-th generation ($i\geq 2$) the sum of $g^2$ over all $j$ splits into two categories. The nodes where $g$ is half the value of  $g$ at its father and the nodes where $g$ is equal to the value of $g$ at its father. The two categories have the same amount of members and since the number of $j$ is $2^{i-1}$ in total, each category has $2^{i-2}$ members. Namely,

$$\sum\limits_{j=1}^{2^{i}}g^2(I_{i+1,j})= \frac{1}{2^2}\cdot\sum\limits_{j=1}^{2^{i-2}}g^2(I_{i,j})+ \sum\limits_{j=1}^{2^{i-2}}g^2(I_{i,j})=\frac{5}{4}\cdot \sum\limits_{j=1}^{2^{i-2}}g^2(I_{i,j})$$

Which means that the sum on the next generation is $\frac{5}{4} \times$ the sum on the previous generation. Using this formula recursively we get that 

$$\sum\limits_{\al\leq I_0}g^2(\al)=\sum\limits_{i=1}^N\sum\limits_{j=1}^{2^{i-1}}g^2(I_{i,j})= \sum\limits_{i=1}^N (\frac{5}{4}\big)^i \cdot g^2(I_{1,1})=4\Big(\big(\frac{5}{4}\big)^N -1\Big)$$ as $g(I_{1,1})=g(I_0)=1$.

For $\gamma$ in we will choose $I_0$ to check that \eqref{gamma} fails.  But since $\lambda=N$ we have $\lambda g(\gamma)= N g(I_0)=N$ and obviously 

$$4\Big(\big(\frac{5}{4}\big)^N -1\Big)>> N$$ which means

$$\sum\limits_{\al\leq I_0}g^2(\al)>>\lambda g(I_0)$$
\eqref{gamma} fails blatantly, and therefore we can not replace super-additive requirement with increasing and sub-additive requirement.

\subsection{The same holds for general $p>1$} Let $p>1$. We look at Lemma \ref{Phi}(Lemma 2.2 of \cite{AMPVZ-K}) when we set $g\rightarrow g^{p-1}$ and take $h=g$. Then that lemma becomes:

\begin{lemma}
\label{2.2-2}
Let $T$ be a finite tree and $g: T \to [0,\infty)$.
Assume that $g^{p-1}$ is super-additive and $I  g \leq \lambda$ on $\ \text{supp} g$.
Then for every $\gamma \in T$ we have
\begin{equation}
\label{gammap}
\sum_{\alpha \leq \gamma} g^p(\alpha)
\leq
\lambda g^{p-1}(\gamma).
\end{equation}
\end{lemma}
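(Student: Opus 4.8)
\textbf{Proof plan for Lemma \ref{2.2-2}.}
The plan is to reduce this to the already-established Lemma \ref{2.2-1} (equivalently Lemma \ref{Phi} with $g=h$) by a direct change of variable. Set $G := g^{p-1}$. By hypothesis $G$ is superadditive on $T$, so Lemma \ref{lem:supadditive-l1linf} is available for $G$ together with any majorant function $h$ with $Ih\le\mu$ on $\ \text{supp}\,G$; in particular Lemma \ref{2.2-1} applied to $G$ would give $\sum_{\alpha\le\gamma}G^2(\alpha)\le \mu\, G(\gamma)$ whenever $IG\le\mu$ on $\ \text{supp}\,G$. The obstruction is that the left side of \eqref{gammap} is $\sum g^p = \sum G\cdot g = \sum G^{\frac{2}{?}}$, which is not $\sum G^2$ unless $p=3$, and the hypothesis we are given is $Ig\le\lambda$, not $IG\le\lambda$. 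So the correct route is to invoke Lemma \ref{lem:supadditive-l1linf} directly with the superadditive function $G=g^{p-1}$ and the auxiliary function $h:=g$: then $Ih = Ig\le\lambda$ on $\ \text{supp}\,g\supseteq\ \text{supp}\,G$ (note $\ \text{supp}\,g = \ \text{supp}\,g^{p-1}$ since $p>1$), and the conclusion of that lemma reads
\[
\sum_{\alpha\le\gamma} G(\alpha)\,h(\alpha) = \sum_{\alpha\le\gamma} g^{p-1}(\alpha)\,g(\alpha) = \sum_{\alpha\le\gamma} g^p(\alpha) \le \lambda\, G(\gamma) = \lambda\, g^{p-1}(\gamma),
\]
which is exactly \eqref{gammap}.

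First I would spell out that $\ \text{supp}(g^{p-1}) = \ \text{supp}(g)$ for $p>1$, so that the hypothesis ``$Ig\le\lambda$ on $\ \text{supp}\,g$'' is precisely ``$I h\le\lambda$ on $\ \text{supp}\,G$'' in the language of Lemma \ref{lem:supadditive-l1linf}. Next I would verify the two hypotheses of that lemma in the present notation: (i) $G=g^{p-1}$ is superadditive — this is assumed; (ii) $h=g$ satisfies $Ih\le\lambda$ on $\ \text{supp}\,G$ — this is the assumption $Ig\le\lambda$. Then I would simply quote Lemma \ref{lem:supadditive-l1linf} with $(g,h,\beta)\rightsquigarrow(G,g,\gamma)$ and read off the displayed identity above. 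No induction is needed here because the induction was already done once and for all inside Lemma \ref{lem:supadditive-l1linf}.

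The main point to be careful about — and what I expect to be the only genuine subtlety — is the bookkeeping with the exponent: the product $G\cdot h$ is $g^{p-1}\cdot g^1 = g^p$ exactly, with no loss, so Lemma \ref{lem:supadditive-l1linf} yields \eqref{gammap} with the clean constant $\lambda$ (not merely $\lesssim\lambda$). It is worth emphasizing that this is still a \emph{correct} lemma; the polemical content of the section is that the hypothesis ``$g^{p-1}$ superadditive'' is the genuinely restrictive one — one cannot weaken it to ``$g$ increasing'' or even ``$g$ superadditive'' — which is exactly what the counterexample with $g(I_-)=g(I)/2$, $g(I_+)=g(I)$ in the preceding subsection shows: there $g$ is increasing and strictly subadditive, $g^{p-1}$ is not superadditive, and \eqref{gamma} (the case $p=2$, $\gamma=I_0$ of \eqref{gammap}) fails by an exponential factor $(5/4)^N$ against the linear bound $\lambda g(\gamma)=N$.
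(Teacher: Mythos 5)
Your proposal is correct and matches the paper's approach exactly: the paper's own ``proof'' is the one-line remark that Lemma~\ref{2.2-2} is just Lemma~\ref{lem:supadditive-l1linf} (the superadditive $\ell^1\text{--}\ell^\infty$ lemma, i.e.\ Lemma~2.2 of \cite{AMPVZ-K}) specialized by the substitution $g \rightarrow g^{p-1}$, $h := g$, which is precisely what you spell out, together with the routine checks that $\operatorname{supp}(g^{p-1}) = \operatorname{supp}(g)$ and that $g^{p-1}\cdot g = g^p$. (The paper's cross-reference ``\ref{Phi}'' here is a labeling slip and should point to Lemma~\ref{lem:supadditive-l1linf}; you identified the correct target.)
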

 
Of course it holds, it is just a particular case of Lemma \ref{Phi}.

Now the question is again the same: Can we instead of ``$g^{p-1}$ super-additive'' have ``$g^{p-1}$ increasing''? Of course the latter is equivalent to $g$ increasing. The answer is still ``no'' and the above counterexample is the one which gives this answer.

Indeed, take the same $g$ as above. It is increasing as we have shown. Also $g^{p-1}$ is not super-additive, but it is strictly sub-additive as before: $g^{p-1}(I_+) +g^{p-1}(I_-) = \frac{1}{2^{p-1}}g^{p-1}(I)+ g^{p-1}(I)> g^{p-1}(I) $.

Now the sum $\sum\limits_{j=1}^{2^{i-1}}g^p(I_{i+i,j})$ is $\frac{2^p+1}{2^p} \times$ the sum of the previous generation and again for the same choices of $\lambda$ and $I_0$ we get 

$$\sum\limits_{\al\leq I_0}g^p(\al)= \Big(2^p+1\Big)\bigg(\Big(\frac{2^p+1}{2^p}\Big)^N - 1\bigg)$$

which is, again, much bigger than $N=\lambda g(I_0)$.

\subsection{Straightforward counterexample to Theorem \ref{inter} in case of increasing.} Our approach above was to find a counterexample to Lemma \ref{2.2-2} if we were to substitute $g$ ``super-additive'' with $g$ ``increasing''. 
Another approach is to go directly to Theorem \ref{inter} and replace super-additive with increasing. The same counterexample as above shows that this can not be done for \textbf{any} $p\geq 1$.

The setting of the counterexample is this. Let $g$ as above. Then, as before, we can take $\lambda:=N$. Now we can take $\delta:=1$, as we will define the function $f$ to be equal to $1$ and  supported on the set $\{\al:\al\geq \om_1\}$ where $\om_1$ is the left-most boundary point as we look at the base of the tree. We know that $g(I_0)=1/2$ and on its left child is half of it, on the left child of this child is half of it and so on. So $Ig(\om_1)\leq 1$ and for any $\al$ in the set above $Ig(\al)\leq Ig(\om_1)\leq 1$. 

Also, as $f$ is equal to $1$ on this set, $f(I_0)=1$. Therefore, for any $\al \in T$ we have $If(\al)\geq f(I_0)=1$. Additionally we have $\sum\limits_T f^p= N\cdot 1^p=N$.  But then we have 

$$\sum\limits_{T}(If\cdot g)^p \geq \sum\limits_{T} g^p \underset{above}{=} \Big(2^p+1\Big)\bigg(\Big(\frac{2^p+1}{2^p}\Big)^N - 1\bigg) >> N^2 = \delta^{p-1} \cdot \lambda \cdot \sum\limits_T f^p$$

\section{Try to prove Lemma 2.3 of \cite{AMPVZ-K} for $p> 2$}
\label{another}

\subsection{We can take supremum over  \text{supp}(f) in Lemma \ref{lem:I2-positive} (Lemma 2.3 of \cite{AMPVZ-K})}

First of all, $I$ has to be the Hardy operator and not just any integral operator.

By inspecting the proof of Lemma \ref{lem:I2-positive} (before we apply Cauchy-Schwarz we attach $\one_{ \text{supp}(f)}$ on the second term) to get eventually:

$$\|I ^*(I f\cdot g)\|_2^2\leq \int I \big(I ^*g \one_{ \text{supp}(f)}\big)\cdot (I f)^2g^2\leq \bigg(\sup\limits_{ \text{supp}(g)}I \big(I ^*g \one_{ \text{supp}(f)}\big)\bigg)\int (I f)^2g^2$$

Now let's look at $\sup\limits_{ \text{supp}(g)}I \big(I ^*g \one_{ \text{supp}(f)}\big)$ and  \text{supp}ose it is achieved for some $x\in T$. If $x\not\in  \text{supp}(f)$ then obviously $I \big(I ^*g \one_{ \text{supp}(f)}\big)(x)=I \big(I ^*g \one_{ \text{supp}(f)}\big)(\hat{x})$ where $\hat{x}$ is the father of $x$. If again $\hat{x}\not\in  \text{supp}(f)$ then we have again equality with the father of $\hat{x}$ and so on. We stop at the minimal $x'$ which is an ancestor of $x$ and
$x'\in  \text{supp}(f)$. So far we have equalities everywhere and therefore the supremum is achieved at some $x'\in  \text{supp}(f)$. Thus we have:

$$\sup\limits_{ \text{supp}(g)}I \big(I ^*g \one_{ \text{supp}(f)}\big)=\sup\limits_{ \text{supp}(g)\cap  \text{supp}(f)}I \big(I ^*g \one_{ \text{supp}(f)}\big)\leq \sup\limits_{ \text{supp}(g)\cap  \text{supp}(f)}I  I ^*g $$

Let $p\geq 2$ (although we care about $p>2$ as for $p=2$ the result has already been established) and $q$ its H\"older conjugate.

We work on simple finite trees $T_x, T_y$ and their product is a bi-tree which we denote by $\cT$. 

We say $g: T_x \rightarrow [0,+\infty)$ is of \textit{special form} if there is some $\beta\in T_y$ and a measure $m$ on $\cT$ such that $g(\gamma)=g^{\beta}(\gamma)=I_y\big(m^{q-1}(\gamma\times \boldsymbol{\cdot})\big)(\beta),\quad \forall \gamma\in T_x$ where $I_y$ is the one-dimensional Hardy operator on $T_y$.

\subsection{We want to prove the following theorem}

\begin{theorem}\label{inter2}
Let $f,g$ be positive functions on $T_x$ and numbers $0<\delta<<\lambda$ with $ \text{supp}(f)\subseteq\big\{ I  g\leq \delta\big\}$. If $g^{p-1}$ is super-additive, it has the above special form and $I g\leq \lambda$ on $T_x$ then we have for $p\geq 2$:

$$||I  f\cdot g||_{\ell^p(T_x)}\leq C \delta^\frac{p-1}{p}\cdot\lambda^{\frac{1}{p}} ||f||_{\ell^p(T_x)}$$
\end{theorem}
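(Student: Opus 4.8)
\subsection*{Proof proposal for Theorem \ref{inter2}}

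The plan is to run the same two-ingredient scheme that yields the $p=2$ case — the sharpened Lemma~\ref{lem:I2-positive} (with the supremum taken over $\text{supp}(g)\cap\text{supp}(f)$, which is available precisely because $I$ is the Hardy operator) together with the superadditivity estimate Lemma~\ref{2.2-1} — but with each ingredient replaced by its $\ell^{p}$ analogue.

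The easy half is Lemma~\ref{2.2-2}, the $\ell^{p}$ analogue of Lemma~\ref{2.2-1}: since $g^{p-1}$ is superadditive and $Ig\le\lambda$ on $\text{supp}(g)$, we get $I^{*}(g^{p})(\gamma)=\sum_{\alpha\le\gamma}g^{p}(\alpha)\le\lambda\,g^{p-1}(\gamma)$ for every $\gamma$. Applying the Hardy operator once more and using $\sum_i a_i^{p-1}\le\big(\sum_i a_i\big)^{p-1}$ (valid since $p-1\ge1$), we obtain for every $\omega\in\text{supp}(f)$
\[
I\big(I^{*}(g^{p})\big)(\omega)\ \le\ \lambda\sum_{\gamma\ge\omega}g^{p-1}(\gamma)\ \le\ \lambda\Big(\sum_{\gamma\ge\omega}g(\gamma)\Big)^{p-1}=\lambda\,\big(Ig(\omega)\big)^{p-1}\ \le\ \lambda\,\delta^{p-1}.
\]
Thus the whole theorem reduces to an $\ell^{p}$ analogue of Lemma~\ref{lem:I2-positive} of the shape
\[
\sum_{T_x}(If)^{p}g^{p}\ \lesssim\ \Big(\sup_{\text{supp}(g)\cap\text{supp}(f)}I\big(I^{*}(g^{p})\big)\Big)\sum_{T_x}f^{p},
\]
for then the previous display gives $\sum(If)^{p}g^{p}\lesssim\lambda\delta^{p-1}\sum f^{p}$, which after taking $p$-th roots is exactly $\|If\cdot g\|_{\ell^{p}}\le C\,\delta^{(p-1)/p}\lambda^{1/p}\|f\|_{\ell^{p}}$. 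When $p=2$ this reduced inequality is precisely the sharpened Lemma~\ref{lem:I2-positive} applied with $g$ replaced by $g^{2}$, so the scheme is at least consistent. For $p>2$ I would try to prove it by imitating the duality proof of Lemma~\ref{lem:I2-positive}: pair $\sum(If)^{p}g^{p}$ against a test function $h$ with $\|h\|_{\ell^{q}}\le1$, transfer $I$ onto $I^{*}$, and attempt to reabsorb the resulting power of $If$ into the left side. It is here that the hypothesis that $g$ is of special form, $g(\gamma)=I_{y}\big(m^{q-1}(\gamma\times\fdot)\big)(\beta)$, is meant to enter, since it exhibits $g$ as a one-parameter Hardy transform of a genuine two-parameter object and thereby re-supplies the ``$II^{*}$'' structure that the $L^{2}$ symmetrization produces for free.

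The main obstacle is exactly that last step. The proof of Lemma~\ref{lem:I2-positive} rests on the identity $2ab\le a^{2}+b^{2}$, which linearizes the cross term after Cauchy--Schwarz and is special to $L^{2}$; for $p\neq2$ there is no comparably clean substitute — Young's inequality $ab\le a^{r}/r+b^{r'}/r'$ produces mismatched exponents and does not close the estimate — and it is not at all clear that the special-form hypothesis actually repairs this. I would also note that the obvious alternative, interpolating between the endpoint $p=\infty$ (Theorem~\ref{inter}/Lemma~\ref{infinity}, which needs only that $g$ is increasing) and the endpoint $p=2$ (Lemma~\ref{Phi}), is blocked here: the $p=2$ endpoint requires $g$ \emph{itself} to be superadditive, whereas for $p>2$ we only assume $g^{p-1}$ superadditive, a strictly weaker condition. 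Hence a direct $\ell^{p}$ argument seems to be forced, the $\ell^{p}$ analogue of Lemma~\ref{lem:I2-positive} is the crux, and — in the spirit of this note — it is the likely point at which the ``natural proof'' breaks down.
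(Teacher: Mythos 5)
Your proposal follows essentially the same route as the paper's own argument: first the superadditivity of $g^{p-1}$ plus Lemma~\ref{2.2} gives $I\bigl(I^{*}(g^{p})\bigr)\le\lambda\,\delta^{p-1}$ on $\text{supp}(f)$ (this is \eqref{gest} combined with $I(g^{p-1})\le (Ig)^{p-1}$), and then everything reduces to the $\ell^{p}$ analogue of Lemma~\ref{lem:I2-positive} with the supremum over $\text{supp}(g)\cap\text{supp}(f)$, i.e.\ to Lemma~\ref{new2.3} applied with $g^{p}$ in place of $g$. Be aware, however, that the paper does not actually prove Theorem~\ref{inter2}: the remark preceding its ``proof'' states explicitly that the authors failed, and the step you correctly single out as the crux is not merely unproven but \emph{false} for $p>2$ --- Section~\ref{Counter} constructs a counterexample to Lemma~\ref{new2.3} for superadditive $g$, and notes the same example defeats the variant with $g^{p}$ and ``$g^{p-1}$ superadditive.'' So your diagnosis (the $L^{2}$ symmetrization via $2ab\le a^{2}+b^{2}$ has no $\ell^{p}$ substitute, and interpolation is blocked because the two endpoints need incompatible hypotheses) coincides with the paper's conclusion; the only thing the paper adds beyond your write-up is the explicit counterexample showing this route is genuinely closed rather than merely difficult.
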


First we state two lemmata of \cite{AMPVZ-K}.

\begin{lemma}
\label{2.2}
Let $T'$ be a finite tree and $g,h : T' \to [0,\infty)$.
Assume that $g$ is super-additive and $I  h \leq \lambda$ on $\ \text{supp} g$.
Then for every $\gamma \in T'$ we have
\[
\sum_{\alpha \leq \gamma} g(\alpha)h(\alpha)
\leq
\lambda g(\gamma).
\]
\end{lemma}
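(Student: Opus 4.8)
The statement is verbatim Lemma~\ref{lem:supadditive-l1linf} (with $\gamma$ written for $\beta$), so the plan is simply to run the same induction on the depth of the tree. First I would normalize the situation: since the left-hand side only sees vertices $\alpha\le\gamma$ and vertices off $\text{supp}\,g$ contribute nothing, I may pass to the subtree $\{\alpha\le\gamma\}\cap\text{supp}\,g$ rooted at $\gamma$; thus without loss of generality $\gamma$ is the unique maximal element of $T'$ and $T'=\text{supp}\,g$, with $Ih(\alpha)=\sum_{\alpha'\ge\alpha}h(\alpha')$ computed inside $T'$.

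Next I would prove, by induction on the depth of the subtree below $\gamma$, the slightly stronger statement
\[
\sum_{\alpha\le\gamma} g(\alpha)h(\alpha)\ \le\ g(\gamma)\,\sup_{\alpha\le\gamma}\ \sum_{\alpha\le\alpha'\le\gamma} h(\alpha')\ =\ g(\gamma)\,\sup_{\alpha\le\gamma} Ih(\alpha),
\]
which is needed so that the recursion carries the \emph{local} supremum of $Ih$ rather than the global bound $\lambda$. The inductive step is to split off the top vertex,
\[
\sum_{\alpha\le\gamma} g(\alpha)h(\alpha) = g(\gamma)h(\gamma) + \sum_{\gamma'\in\ch\gamma}\ \sum_{\alpha\le\gamma'} g(\alpha)h(\alpha),
\]
then apply the inductive hypothesis to each child, bound every local supremum $\sup_{\alpha\le\gamma'}\sum_{\alpha\le\alpha'\le\gamma'}h(\alpha')$ by the $\gamma'$-independent quantity $\sup_{\alpha<\gamma}\sum_{\alpha\le\alpha'<\gamma}h(\alpha')$ (monotonicity of the supremum under enlarging both the index set and the summation range, using $h\ge0$), pull that quantity out of the sum over children, and apply superadditivity $\sum_{\gamma'\in\ch\gamma} g(\gamma')\le g(\gamma)$. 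Regrouping yields $g(\gamma)\bigl(h(\gamma)+\sup_{\alpha<\gamma}\sum_{\alpha\le\alpha'<\gamma}h(\alpha')\bigr)=g(\gamma)\sup_{\alpha\le\gamma}\sum_{\alpha\le\alpha'\le\gamma}h(\alpha')$, which closes the induction (the base case, $\gamma$ a leaf, is the identity $g(\gamma)h(\gamma)=g(\gamma)\sup_{\alpha\le\gamma}Ih(\alpha)$). Finally, since $\gamma\in\text{supp}\,g$ and every $\alpha\le\gamma$ lies in $T'=\text{supp}\,g$, the hypothesis $Ih\le\lambda$ on $\text{supp}\,g$ gives $\sup_{\alpha\le\gamma} Ih(\alpha)\le\lambda$, and the claim follows.

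I do not expect a real obstacle here; the only point requiring care is choosing the induction hypothesis with the local quantity $\sup_{\alpha\le\gamma}Ih(\alpha)$ in place of $\lambda$, since the supremum over a child's subtree is genuinely smaller and that slack is exactly what absorbs the new term $h(\gamma)$ at the parent level. This is also precisely where superadditivity is indispensable: the inequality $\sum_{\gamma'\in\ch\gamma}g(\gamma')\le g(\gamma)$ fails for a merely increasing $g$, which is why, as the $(5/4)^N$-example of the previous section shows, ``super-additive'' cannot be relaxed to ``increasing''.
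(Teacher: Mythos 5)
Your proposal is correct and follows essentially the same induction-on-depth argument as the paper's proof of Lemma~\ref{lem:supadditive-l1linf}, down to the same reduction to $T'=\text{supp}\,g$ with $\gamma$ maximal and the same strengthened inductive claim with the local quantity $\sup_{\alpha\le\gamma}Ih(\alpha)$ in place of $\lambda$. The only difference is expository: you spell out the base case and the monotonicity step that the paper leaves implicit, and you flag explicitly why super-additivity cannot be weakened, which is consistent with the paper's later counterexamples.
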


and

\begin{lemma}
\label{2.3}
Let $I $ be the  on $T_x$ and $f,g$ positive functions.
Then
\[
\sum_{T_x} (I f)^{2} g \leq \Bigl( \sup_{\ \text{supp} g\cap \ \text{supp} f} I I ^{*}g \Bigr) \sum_{T_x} f^{2}.
\]
\end{lemma}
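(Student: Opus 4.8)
This is the sharpened form of Lemma~\ref{lem:I2-positive}: the same inequality, but with $\sup II^{*}g$ taken only over $\text{supp} g\cap\text{supp} f$ instead of all of $\text{supp} g$. The plan is to rerun the proof of Lemma~\ref{lem:I2-positive}, exploiting the extra localization that $\text{supp} f$ provides, and then to use that $I$ is the Hardy operator on a \emph{tree} to transport the leftover supremum onto $\text{supp} f$. The first two steps below are insensitive to the ambient graph and to the precise kernel; the last one is where the tree structure (a maximum principle) is essential.

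First I would write, by the adjoint relation and $f=f\,\one_{\text{supp} f}$,
\[
\sum_{T_{x}}(If)^{2}g=\sum_{T_{x}}f\cdot\one_{\text{supp} f}\,I^{*}(If\cdot g)\le\Bigl(\sum_{T_{x}}f^{2}\Bigr)^{1/2}\bigl\|\one_{\text{supp} f}\,I^{*}(If\cdot g)\bigr\|_{2},
\]
i.e.\ insert $\one_{\text{supp} f}$ into the second factor \emph{before} applying Cauchy--Schwarz. Then, expanding $\bigl\|\one_{\text{supp} f}I^{*}(If\cdot g)\bigr\|_{2}^{2}=\sum_{z\in\text{supp} f}\bigl(I^{*}(If\cdot g)(z)\bigr)^{2}$ with the tree kernel $I^{*}h(z)=\sum_{\beta\le z}h(\beta)$, bounding each cross term by $(If)(\beta)(If)(\beta')\le\tfrac12\bigl((If)(\beta)^{2}+(If)(\beta')^{2}\bigr)$ and collapsing the triple sum exactly as in Lemma~\ref{lem:I2-positive} — the only change being that the outer variable now runs over $\text{supp} f$ — I would arrive at
\[
\bigl\|\one_{\text{supp} f}\,I^{*}(If\cdot g)\bigr\|_{2}^{2}\le\sum_{\alpha\in T_{x}}(If)(\alpha)^{2}\,g(\alpha)\cdot I\bigl(\one_{\text{supp} f}\,I^{*}g\bigr)(\alpha).
\]

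The remaining task is to bound $\Psi(\alpha):=I(\one_{\text{supp} f}I^{*}g)(\alpha)=\sum_{y\ge\alpha,\ y\in\text{supp} f}I^{*}g(y)$ on $\text{supp} g$ by $\sup_{\text{supp} g\cap\text{supp} f}II^{*}g$. On a tree the ancestors of a non-root $\alpha$ split as $\{\alpha\}\sqcup\{y\ge\hat\alpha\}$ with $\hat\alpha$ the father, so $\Psi(\alpha)=\one_{\text{supp} f}(\alpha)\,I^{*}g(\alpha)+\Psi(\hat\alpha)$, and in particular $\Psi(\alpha)=\Psi(\hat\alpha)$ whenever $\alpha\notin\text{supp} f$. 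Taking $\alpha_{0}\in\text{supp} g$ that attains $\sup_{\text{supp} g}\Psi$ and climbing through its ancestors, $\Psi$ stays constant until the chain first meets $\text{supp} f$: either no ancestor of $\alpha_{0}$ lies in $\text{supp} f$, in which case $\Psi(\alpha_{0})=0$, or we reach the smallest ancestor $\alpha_{0}'\in\text{supp} f$ of $\alpha_{0}$, for which $\Psi(\alpha_{0})=\Psi(\alpha_{0}')\le II^{*}g(\alpha_{0}')$ since $\one_{\text{supp} f}\le 1$. As $\alpha_{0}'\ge\alpha_{0}\in\text{supp} g$ and $\text{supp} g$ is closed under passing to ancestors — which holds in every application of the lemma, e.g.\ as soon as $g$ is superadditive — we get $\alpha_{0}'\in\text{supp} g\cap\text{supp} f$, hence $\sup_{\text{supp} g}\Psi\le\sup_{\text{supp} g\cap\text{supp} f}II^{*}g$.

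Feeding this into the previous display yields $\bigl\|\one_{\text{supp} f}I^{*}(If\cdot g)\bigr\|_{2}^{2}\le\bigl(\sup_{\text{supp} g\cap\text{supp} f}II^{*}g\bigr)\sum_{T_{x}}(If)^{2}g$; substituting into the first display and dividing by $\bigl(\sum_{T_{x}}(If)^{2}g\bigr)^{1/2}$ gives exactly the claimed inequality. The main obstacle is the step in the preceding paragraph: it rests on $I$ being the Hardy operator on a tree, so that $\Psi$ is constant along every edge leaving $(\text{supp} f)^{c}$ and the supremum can be pushed down onto $\text{supp} f$ — the very maximum principle that fails on a bi-tree (compare \eqref{one-dim} and the counterexample of Section~\ref{fg-cex}). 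The rest — the localized Cauchy--Schwarz, the symmetrization, and the concluding algebra — carries over verbatim to $T^{2}$.
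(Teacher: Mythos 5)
Your proof is correct and follows essentially the same route as the paper's: insert $\one_{\operatorname{supp} f}$ before applying Cauchy--Schwarz, symmetrize the resulting triple sum to produce $I(\one_{\operatorname{supp} f}\,I^{*}g)$, and then use the tree structure to push the supremum from $\operatorname{supp} g$ down/up onto $\operatorname{supp} f$. You are in fact more careful than the paper on one point: the climbing argument lands in $\operatorname{supp} g\cap\operatorname{supp} f$ (rather than merely in $\operatorname{supp} f$) only when $\operatorname{supp} g$ is an up-set --- e.g.\ when $g$ is increasing or superadditive, as it is in every application but not in the literal statement --- and you correctly flag and justify this, whereas the paper's own derivation asserts the corresponding equality of suprema without comment.
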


\begin{remark}
However, we failed to prove Theorem \ref{inter2}.  Here is an exhibition of what went wrong.
\end{remark}

\begin{proof}

First we use Lemma \ref{2.2} by setting $g\rightarrow g^{p-1}$ and $h\rightarrow g$. We have to check two things. First $I g \leq \lambda$ on $\ \text{supp} g^{p-1}$  holds as $ \text{supp} (g^{p-1})= \text{supp}(g)$ and we even have $I g \leq \lambda$ on the whole tree $T_x$. Second, we have to check $g^{p-1}$ is super-additive. 

Recall that  we have the following representation of $g$: 

$$g(\gamma)=g^{\beta}(\gamma)=\sum\limits_{\beta'\geq \beta}m^{q-1}(\gamma\times\beta'), \quad \forall \gamma \text{ in } T_x$$

Let $\gamma\in T_x$ be a dyadic interval and $\gamma_1, \gamma_2$ its two children.   

For the proof we make use of Minkowski's integral inequality for counting measures and exponent $p-1\geq 1$. We have

$$\Big(g^{p-1}(\gamma_1)+g^{p-1}(\gamma_2)\Big)^{\frac{1}{p-1}}= \Bigg(\sum\limits_{i=1}^2 \Big(\sum_{\beta'\geq \beta}m^{q-1}(\gamma_i\times\beta')\Big)^{p-1}\Bigg)^{\frac{1}{p-1}}\leq$$

$$\underset{Minkowski}{\leq}\sum_{\beta'\geq \beta} \Big(\sum\limits_{i=1}^2 m^{(q-1)(p-1)}(\gamma_i\times\beta')\Big)^{\frac{1}{p-1}}\underset{\substack{(q-1)(p-1)=1 \\ m \text{ is measure }}}{\leq}$$ 

$$\leq \sum_{\beta'\geq \beta} \Big( m(\gamma\times\beta')\Big)^{\frac{1}{p-1}}=g(\gamma)$$ as $\frac{1}{p-1}=q-1$.

Therefore, we conclude (by raising to the power $p-1$) that $g^{p-1}$ is super-additive.

What we achieved here is to prove for all $\gamma \in T_x$

\begin{equation}\label{gest}\sum_{\alpha \leq \gamma} g^p(\alpha)
\leq
\lambda g^{p-1}(\gamma) 
\end{equation}

This inequality would be useful and {\bf exactly what we would need to generalize results of} \cite{AMPVZ-K} {\bf from $p=2$ to $p\ge 2$}. But only if we could get a similar result to Lemma \ref{lem:I2-positive} (Lemma \ref{2.3} of \cite{AMPVZ-K}) with general $p\geq 2$ instead of $2$. Let's first state the result

\begin{lemma}
\label{new2.3}
Let $I $ be the Hardy operator on  a simple dyadic tree $T_x$ and $f,g$ positive functions with $g$ being increasing on $T_x$.
Then
\[
\sum_{T_x} (I f)^{p} g \leq \Bigl( \sup_{\ \text{supp} \,g\,\cap \ \text{supp} \,f} I I ^{*}g \Bigr) \sum_{T_x} f^{p}.
\]
\end{lemma}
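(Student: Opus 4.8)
\textbf{Plan of proof for Lemma \ref{new2.3}.}

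The natural strategy is to imitate, step by step, the $p=2$ argument of Lemma \ref{lem:I2-positive} (Lemma \ref{2.3}), replacing the role of Cauchy--Schwarz by H\"older's inequality with exponents $p$ and $q=p/(p-1)$, and replacing the elementary bound $ab\le \frac12(a^2+b^2)$ by a convexity/Young-type bound adapted to the exponent $p$. Concretely, I would first reduce to $f$ strictly positive on $\ \text{supp}\, f$, then write, by duality,
\[
\sum_{T_x}(I f)^p g = \sum_{T_x} f\cdot I^*\bigl((If)^{p-1} g\bigr)
\le \|f\|_{\ell^p}\,\bigl\|I^*\bigl((If)^{p-1}g\bigr)\bigr\|_{\ell^q}.
\]
The crux is then to control $\bigl\|I^*\bigl((If)^{p-1}g\bigr)\bigr\|_{\ell^q}$ by $\bigl(\sup_{\ \text{supp}\,g\cap\ \text{supp}\,f} II^*g\bigr)^{1/q}\bigl(\sum (If)^p g\bigr)^{1/q}$, after which the conclusion follows by rearranging, exactly as in the $p=2$ case. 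I would also use the observation from the previous subsection (that the supremum in Lemma \ref{lem:I2-positive} can be taken over $\ \text{supp}\, g\cap\ \text{supp}\, f$ rather than all of $\ \text{supp}\, g$) to get the sharper supremum here, by inserting $\one_{\ \text{supp}\,f}$ before applying H\"older and noting $I^*\!\bigl((If)^{p-1}g\bigr)$ is supported where $(If)^{p-1}g\neq 0$, hence on the downward closure of $\ \text{supp}\,f\cap\ \text{supp}\,g$.

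The key computation is to expand $\bigl\|I^*\bigl((If)^{p-1}g\bigr)\bigr\|_{\ell^q}^q$ as a multiple sum over the kernel $K(x,y)$ of $I^*$ (i.e.\ $K(x,y)=\one_{y\le x}$ on the tree),
\[
\sum_{y}\Bigl(\sum_{x\ge y}(If)^{p-1}(x)\,g(x)\Bigr)^q,
\]
and to distribute $q$ factors of the inner sum using the generalized arithmetic--geometric or Young inequality so that one factor carries the weight $II^*g$ and the remaining $q-1$ factors reassemble into $\sum(If)^p g$. For $q=2$ this is precisely the symmetrization step in the proof of Lemma \ref{lem:I2-positive}; for general $q$ one would want an inequality of the shape
\[
\prod_{j=1}^{q}\Bigl(\sum_{x\ge y}(If)^{p-1}(x)g(x)\Bigr)
\ \le\ \text{(something that telescopes against }II^*g\text{)}.
\]
The role of the hypothesis ``$g$ increasing'' should enter exactly here, mimicking how monotonicity of $g$ along the tree was used in Lemma \ref{infinity} and in Lemma \ref{2.2}: along any chain, $g$ is constant or growing, so the factors $g(x)$ for $x\ge y$ can be compared with $g$ at the relevant vertices and pulled out, while the remaining factors $(If)^{p-1}$ regroup into powers of $If$.

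\textbf{Where I expect the obstacle to be.} The symmetrization trick at the heart of Lemma \ref{lem:I2-positive} is special to the exponent $2$: squaring $I^*(\cdot)$ produces a \emph{two-fold} sum that is symmetric in the two copies of the variable, which is exactly what lets one use $ab\le\frac12(a^2+b^2)$ and fold everything back into $\int (If)^2 g$. For $q\neq 2$ one is forced to handle a $q$-fold (or, after using H\"older differently, fractional) product, and there is no analogous symmetry; the natural Young-type splitting of $q$ factors into ``one factor of weight $II^*g$'' plus ``$q-1$ factors of $(If)^p g$'' does not close, because the $q-1$ leftover factors do not recombine into $(If)^{p}$ on the nose --- one loses powers of $If$ or picks up extra powers of $g$ that the hypothesis ``$g$ increasing'' is too weak to absorb. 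This is the precise point at which the proof fails, and it dovetails with the Remark preceding the lemma (``we failed to prove Theorem \ref{inter2}''): Lemma \ref{2.2}/Lemma \ref{2.2-2} already gives us \eqref{gest}, i.e.\ the ``$p\le 2$-type'' half of the argument, but the ``$p\ge 2$-type'' half embodied in a $\ell^p$-analogue of Lemma \ref{lem:I2-positive} is exactly what cannot be pushed through by these elementary means, so one should not expect to complete this plan — and indeed the subsequent section presumably exhibits a counterexample showing Lemma \ref{new2.3} is false as stated for $p>2$.
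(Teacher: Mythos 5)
Your proposal correctly recognizes that Lemma \ref{new2.3} cannot be established by mimicking Lemma \ref{lem:I2-positive}, and that the obstruction is exactly the $q$-fold, non-symmetric product that cannot be folded back into $\sum (If)^p g$ once $q\neq 2$. This matches the paper: the authors do not prove Lemma \ref{new2.3} at all. Remark \ref{mainremark}, immediately after its statement, flags that it holds only for $1\le p\le 2$ and is \emph{false} for $p>2$, and Section \ref{Counter} constructs an explicit counterexample (a super-additive $g$ concentrated on the far-left branch of a depth-$N$ tree, with $f\equiv 1$ along that branch, for which $\sum_T(If)^p g$ grows like $N^{p-1}$ while $\|II^*g\|_\infty\sum_T f^p$ grows only like $N$). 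Your account of the obstacle also parallels the paper's own dead-end in the section ``Yet another try'': there the authors carry out essentially the same duality-plus-Young computation that you sketch, and stop with the observation that it reduces to Lemma \ref{lem:I2-positive} exactly when $p=q=2$, because controlling the resulting $II^*(g^q)$ term would require super-additivity of $g^{q-1}$, which in fact runs the other way (reverse Minkowski makes it sub-additive). So your diagnosis --- that the symmetrization at the heart of the $p=2$ argument is a genuinely $q=2$ phenomenon --- is accurate and consonant with the paper.

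What your proposal leaves out is the one regime in which the statement \emph{does} hold, $1\le p\le 2$. The paper asserts this by interpolation, with the endpoint $p=2$ given by Lemma \ref{lem:I2-positive} together with the sharpened supremum over $\text{supp}\,g\cap\text{supp}\,f$ established at the start of Section \ref{another}, and the endpoint $p=1$ elementary since $\sum(If)g=\sum f\cdot I^*g$ and $I^*g\le II^*g$ pointwise. So while your reading of the $p>2$ failure is correct and well-aligned with the paper, the only ``proof'' the paper actually offers for this lemma is that short interpolation remark for $p\le 2$; everything else in the surrounding sections is a demonstration of why no such proof can exist once $p>2$. A complete response should therefore split into the two regimes explicitly rather than treating the whole statement as a conjectured dead end.
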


We have added an extra assumption on $g$ that it has to be increasing. 

\begin{remark}
\label{mainremark}
 This is trues for $1\le p\le 2$ and false for $p>2$. We will see in section \ref{Counter} why it fails for  $p>2$.

But Theorem \ref{inter2} holds precisely for $p\ge 2$.  To prove the generalization  of  our embedding theorem for $p\neq 2$
we need both Theorem \ref{inter2} and Lemma \ref{new2.3} to be able to work together. But we claim that 
the first one works for $p\ge 2$ and the second one exactly for $1\le p\le 2$.
\end{remark}

\begin{remark}

If this theorem were true then we would use it with $g\rightarrow g^{p}$. We can see that since $g$ has the aforementioned special form, it is increasing, and so $g^p$ is also increasing. Therefore we get 

$$
\sum_{T_x} (I f)^{p} g^p \leq \Bigl( \sup_{\ \text{supp} g\cap \ \text{supp} f} I I ^{*}g^p \Bigr) \sum_{T_x} f^{p}.
$$
and by making use of \eqref{gest} (note that since $p-1\geq 1$ we have $I (g^{p-1})\leq (I g)^{p-1}$ ) and since $I g\leq \delta$ on $ \text{supp}(f)$ we get the desired estimate.

\end{remark}

\section{Relevant but not useful estimates in the positive direction}

Our main tool to prove Lemma \ref{new2.3} was to use Marcinkiewicz's interpolation. The proper way to do this is to check the boundedness of the operator $I $ from $\ell^{\infty}(T_x)$ to $\ell^{\infty}(T_x, g)$ for a particular fixed $g$. However this is a weird thing to ask as we ``almost'' have the equality $\ell^{\infty}(T_x)=\ell^{\infty}(T_x, g)$ where by almost we mean these are equal when $ \text{supp}(g)=T_x$. Also, $\|I  f\|_{\ell^{\infty}(T_x, g)}= \|I  f\|_{\ell^{\infty}(T_x)}$ except possibly the case where the supremum on RHS is achieved at a node in $T_x\setminus  \text{supp}(g)$.  Thus we can not interpolate, as its not possible to get this estimate.

Now let's see what happens if we interpolate in other ways.

By replacing $g$ with $g^2$ in Lemma \ref{2.3} we have seen that 

$$\int (If)^{2} g^2 \leq \Bigl( \sup_{\ \text{supp} g} II^{*}g^2 \Bigr) \int f^{2}$$ 

Let $A_{g^2}:=\sup\limits_{\ \text{supp} g} II^{*}g^2$. The above becomes

$$\|If\cdot g\|_{\ell^2}\leq A_{g^2}^{\frac{1}{2}}\|f\|_{\ell^2}$$

We have already proven (see Lemma \ref{infinity}) that for $g$ increasing:

$$\|If\cdot g\|_{\ell^{\infty}}\leq A_{g}\|f\|_{\ell^{\infty}}$$

where $A_{g}:=\sup\limits_{\ \text{supp} g} II^{*}g$. We can interpolate then, to get

$$\|If\cdot g\|_{\ell^p} \leq A_{g^2}^{\frac{1}{p}}\cdot A_g^{1-\frac{2}{p}}\|f\|_{\ell^p}$$

while if we interpolate between $\ell^1$ and $\ell^{\infty}$ we get

$$\|If\cdot g\|_{\ell^p} \leq  A_g\|f\|_{\ell^p}$$

which are not useful as $g$ is sub-additive.

\section{Yet another try; We follow the proof of Lemma \ref{lem:I2-positive} (Lemma 2.3 in  \cite{AMPVZ-K})}

In this case we see that 

$$\sum_T (If)^pg^p \leq \|f\|_{\ell^p}\cdot \|I^*\big((If)^{p-1}g^p\big)\|_{\ell^q}$$
and using Young's (product of numbers) inequality we get for the latter term

$$\|I^*\big((If)^{p-1}g^p\big)\|_{\ell^q}^q\leq \frac{1}{p}\Bigl( \sup_{\ \text{supp} g} II^{*}(g^p) \Bigr) \sum_T (If)^pg^q + \frac{1}{q}\Bigl( \sup_{\ \text{supp} g} II^{*}(g^q) \Bigr) \sum_T (If)^pg^p$$
which is exactly the same as Lemma 2.3 when $p=q=2$, but again to estimate $II^*g^q$ we need information about $g^{q-1}$ and this function is sub-additive (by inverse Minkowski).

\end{proof}

\section{Counterexample to Lemma \ref{new2.3}}\label{Counter}

All this is a try to prove Lemma \ref{new2.3} (of this note), but as we will see this is in vain. We will construct a counterexample to this Lemma for $p>2$ (for $1\leq p \leq 2$ the theorem is true by interpolation).

We will give a counterexample for a super-additive function $g$ with this property:  $\|II^*g\|_{\infty}$ has to be achieved for some $\om\in \pd T$ with $ \om\in  \text{supp}(g)$. The theorem we wanted to prove conjectured that for $g$ increasing would be sufficient, but with this counterexample we will show that even super-additivity is not enough.
Also we will create a sub-additive function with this property, hence disproving the Lemma in case we wanted to replace ``increasing'' with ``sub-additive''.

Fix an $\om$ with this property. Then we have $\|II^*g\|_{\infty}=II^*g(\om)$. See below about a possible choice of this function.

We construct the function $f$ as follows. It is equivalent to $1$ inside the totally ordered set $\{\beta: \beta\geq \om \}$ and $0$ otherwise. We enumerate the elements of this set: $u_1=I_0$,..,$u_N=\om$. By construction of $f$ and since $u_k$ lives in the $k$-th generation with $1\leq k \leq N$, we have $If(u_k)=k$ and hence :

$$\sum_T (If)^p g \geq \sum\limits_{k=1}^N (If)^p(u_k) g(u_k)\geq \sum\limits_{k=2}^N k^p g(u_k)$$

Keep in mind that $g(u_k)=I^*g(u_k)-I^*g(u_{k-1})$ and so 

$$\sum\limits_{k=2}^N k^p g(u_k)=\sum\limits_{k=2}^N I^*g(u_k)\big(k^p-(k-1)^p\big)\geq \sum\limits_{k=2}^N I^*g(u_k)\big(k^p-(k-1)^p\big)\geq $$

$$\geq\frac{p}{2^{p-1}}\sum\limits_{k=2}^N I^*g(u_k)k^{p-1}$$ as $\frac{k}{2}\geq 1$.

Also, note that $I^*g(u_k)=II^*g(u_k)-II^*g(u_{k-1})$ and thus

$$\sum\limits_{k=2}^N I^*g(u_k)k^{p-1}=\sum\limits_{k=2}^N  \big(II^*g(u_k)-II^*g(u_{k-1})\big)k^{p-1}=$$

$$II^*g(u_N)N^{p-1} -II^*g(u_1)1^{p-1} -\sum\limits_{k=1}^{N-1}II^*g(u_k)\big((k+1)^{p-1}-k^{p-1}\big) \quad (\star)$$ 

Recall that $u_N=\om$ and $u_1=I_0$ and by assumption $II^*g(\om)=\|II^*g\|_{\infty}$ and therefore

$$(\star)\geq \|II^*g\|_{\infty}\cdot N^{p-1}  - II^*g(I_0) - \|II^*g\|_{\infty}\cdot \sum\limits_{k=m}^{N-1}\big((k+1)^{p-1}-k^{p-1}\big)\geq$$

$$\|II^*g\|_{\infty}\cdot N^{p-1}  - II^*g(I_0) - \|II^*g\|_{\infty}\cdot (p-1)\cdot\sum\limits_{k=1}^{N-1}k^{p-2}\quad (\star\star)$$

and since $\sum\limits_{k=1}^{N-1}k^{p-2} \leq \int\limits_{1}^{N-1}x^{p-2}{d}x = \frac{1}{p-1}\cdot\big((N-1)^{p-1}- 1^{p-1}\big)$ we have

$$(\star\star) \geq \|II^*g\|_{\infty}\cdot N^{p-1}  - II^*g(I_0)  - \|II^*g\|_{\infty}\cdot\big((N-1)^{p-1}- 1\big)=$$

$$\geq \|II^*g\|_{\infty}\cdot \big( N^{p-1}-(N-1)^{p-1}\big)$$

By keeping track of one constant which we left behind, we conclude:

$$\sum_T (If)^p g \geq \frac{p}{2^{p-1}}\cdot \|II^*g\|_{\infty}\cdot \big( N^{p-1}-(N-1)^{p-1}\big)$$

Now \textbf{if} the theorem was true, we would also have

$$\sum_T (If)^p g \leq \|II^*g\|_{\infty} \sum_T f^p$$ but the RHS is equal to $\|II^*g\|_{\infty}\cdot N$ as $f$ is equal to $1$ exactly on $N$ nodes. Therefore 

$$\frac{p}{2^{p-1}}\cdot \|II^*g\|_{\infty}\cdot \big( N^{p-1}-(N-1)^{p-1}\big) \leq \|II^*g\|_{\infty}\cdot N $$

We can cancel the common term and by dividing with $(N-1)^{p-1}$ and taking the limit as $N$ goes to infinity we get :

i) the RHS tends to $0$ (as $p-1>1$) and  

ii) the LHS tends to to $\frac{p}{2^{p-1}}\big(e^{p-1}-1\big)$

Therefore we have a contradiction.

\subsection{Creating $g$ functions} Take $N$ sufficiently large such that the above limiting argument gives a contradiction (everything over there is independent of functions, its just calculus 0).

Then we construct a super-additive $g$  on $T=T_N$ which is a finite simple tree of depth $N$.

We beging by letting $g(I_0)=1/2$. Then we put $g(I_0^+)=1/4,g(I_0^-)=0$. In general: on the left child of every dyadic interval where $g$ is $a\neq 0$ we put the value $a/2$ and on the right we put $0$. We also put $0$ on both children of a dyadic interval where $g$ is 0. Obviously such $g$ is super-additive. 

In general $II^*g(\al)\leq II^*g(\om)$ for $\al> \om$. Now if we  choose $\om \in \pd T$ such that $g(\om)=2^{-N}$ (there is exactly one such $\om$, far-left on the bottom of the tree) we have a strict inequality (i.e $II^*g(\al)< II^*g(\om)$ ) and therefore the maximum is achieved on the boundary. Hence this function satisfies the basic requirement. 

By going back to Lemma \ref{new2.3}, replacing $g$ with $g^p$ and ``increasing'' with ``$g^{p-1}$ super-additive'' can not be proven either. For example, take this same function $g$ as above. Note it is such that $g^{p-1}$ is super-additive and $\|II^*g^p\|_{\infty}$ is achieved on the boundary.

By taking a function $g$ which is equivalent to $1$ on the whole tree, we get a sub-additive function. Note that $g^{p-1}\equiv g.$ Again for this function  $\|II^*g\|_{\infty}$ is achieved on the boundary and hence this is a counterexample to Lemma \ref{new2.3} if we would replace increasing with sub-additive.

\section{A counterexample to Lemma \ref{Phi} on bi-tree}
\label{fg-cex}

\subsection{Statement of the problem}

Let $\bI$ be operator of summation ``up the graph''. It has a formally adjoint operator $\bI^*$ of summation ``down the graph''. We use the same notation for the rooted dyadic tree $T$ and for $T^2$.
On dyadic tree $T$ we have the following key ``majorization theorem with small energy'':
\begin{theorem}
\label{d1}
Let $f, g: T\to \bR_+$, and  1) $f$ is superadditive, 2)  $\ \text{supp} f \subset \{\bI g \le \delta\}$.  Let $\la \ge 10\delta$. Then there exists
$\vf: T\to \bR_+$  such that
\begin{enumerate}
\item $\bI \vf \ge \bI f$ on $\{2\la \le \bI g \le 4\la\}$;
\item $\int_T \vf^2 \le C\frac{\delta^2}{\la^2} \int_T f^2$.
\end{enumerate}
\end{theorem}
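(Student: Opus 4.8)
The plan is to reduce Theorem~\ref{d1} to the one-parameter Lemma~\ref{Phi} applied slice by slice in one of the two parameters, exactly as indicated in Section~\ref{ult}. Write points of $T^2$ as pairs $(\alpha,\beta)$ with $\alpha$ in the first copy $T_x$ and $\beta$ in the second copy $T_y$, and let $\bI = \bI_x\bI_y = \bI_y\bI_x$ be the two-parameter summation operator. For a fixed $\beta\in T_y$ set
\[
f^\beta(\alpha) := F(\alpha,\beta),\qquad g^\beta(\alpha) := \bI_y F(\alpha,\beta) = \sum_{\gamma\ge\beta} F(\alpha,\gamma),
\]
and similarly $G^\beta(\alpha) := \bI_y g(\alpha,\beta)$ (abusing notation, here $g$ is the function of the theorem and $F=f$). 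First I would record the two structural facts that make the slicing work: (i) for each fixed $\beta$ the slice $g^\beta$ is superadditive in $\alpha$ because $f$ is superadditive in the first parameter and $\bI_y$ is a positive (in fact monotone) operator acting only in the second parameter, so it commutes with the superadditivity inequality in $\alpha$; and (ii) $\bI_x(w g^\beta)(\alpha) \le \delta$ on $\operatorname{supp} f^\beta$, which is just the hypothesis $\operatorname{supp} f\subset\{\bI g\le\delta\}$ read in the $\beta$-slice together with $\bI g(\alpha,\beta) = \bI_x\bI_y g(\alpha,\beta) \ge \bI_x g^\beta(\alpha)$...

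The second step is to apply Lemma~\ref{Phi} in the $x$-variable to each slice with weight $w\equiv 1$, $\lambda$ replaced by (a constant multiple of) $\lambda$ and $\delta$ as given, obtaining for each $\beta$ a function $\phi^\beta:T_x\to[0,\infty)$ with $\bI_x\phi^\beta \gtrsim \bI_x f^\beta$ on $\{\lambda/2 < \bI_x g^\beta \le 2\lambda\}$ and $\sum_{\alpha}(\phi^\beta)^2 \lesssim \tfrac{\delta}{\lambda}\sum_\alpha (f^\beta)^2$. One then assembles $\varphi$ on $T^2$ from the slices $\phi^\beta$; the natural guess is $\varphi(\alpha,\beta) := \bI_y^{-1}$ of something, but since $\bI_y$ is not invertible the honest construction is to pick $\varphi$ so that $\bI_y\varphi(\alpha,\beta)$ dominates $\phi^\beta(\alpha)$ while paying only a bounded factor in the $\ell^2(T_y)$-norm — e.g. choose $\varphi$ supported where the telescoping of $\bI_y$ is controlled and use that $\sum_\beta \phi^\beta(\alpha)^2$ telescopes against $\sum_\gamma f(\alpha,\gamma)^2$ via superadditivity in $y$. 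Combining the slice energy estimates and summing in $\beta$ gives
\[
\int_{T^2}\varphi^2 \;\lesssim\; \frac{\delta}{\lambda}\sum_{\beta}\sum_\alpha f^\beta(\alpha)^2 \;=\;\frac{\delta}{\lambda}\sum_\alpha \sum_\beta \Bigl(\sum_{\gamma\ge\beta}f(\alpha,\gamma)\Bigr)^{2},
\]
and a second application of Lemma~\ref{lem:supadditive-l1linf} (with $h$ the indicator-type weight coming from $\bI_y g\le\delta$) in the $y$-variable converts $\sum_\beta(\bI_y f(\alpha,\cdot))^2$ into $\lesssim \frac{\delta}{\lambda}\sum_\gamma f(\alpha,\gamma)^2$, producing the claimed $C\delta^2/\lambda^2$.

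For part (1), on $\{2\lambda\le\bI g\le 4\lambda\}$ one has $\bI_x g^\beta(\alpha)$ comparable to $\lambda$ for the relevant $\beta$ (using that $\bI g = \bI_x(\bI_y g)$ and that $\bI_y g$ is superadditive-ish in $y$), so the slice conclusion $\bI_x\phi^\beta\gtrsim\bI_x f^\beta$ applies, and then $\bI\varphi(\alpha,\beta) = \bI_x\bI_y\varphi \ge \bI_x\phi^\beta \gtrsim \bI_x f^\beta$, which after summing the $y$-direction gives $\bI\varphi\gtrsim\bI f$; the constant can be normalized to $1$ by rescaling $\varphi$, at the cost of a worse absolute constant in (2).

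The step I expect to be the real obstacle is precisely the one flagged in the two remarks preceding the statement: the passage \eqref{one-dim} in the proof of Lemma~\ref{Phi}, namely $\bI(w\one_{\mathcal U} g)\le\sup_{\mathcal U}\bI(wg)\le\delta$, uses the maximum principle on a \emph{simple} tree and is \textbf{false on $T^2$} — an up-set in $T^2$ need not have its $\bI$-mass controlled by the supremum of $\bI(wg)$ over it. Concretely, when I try to run the argument above, the ``weight'' that multiplies the slice data, $w\one_{\mathcal U}$ with $\mathcal U=\{\bI_y g\le\delta\}$ read in each $x$-slice, does not satisfy the one-dimensional bound \eqref{one-dim}, so Lemma~\ref{Phi} cannot be invoked slice-wise with the right $\delta$, and the energy estimate degrades. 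This is exactly why \cite{AMPVZ-K} needed substantial additional work beyond the three lemmas of Section~\ref{basic}, and why (per the final remark of Section~\ref{ult}) a naive slice-wise proof of Theorem~\ref{d1} fails — indeed Section~\ref{fg-cex} is devoted to the counterexample showing the naive statement underlying this reduction breaks on the bi-tree.
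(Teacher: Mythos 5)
Your proposal misreads the statement you were asked to prove. Theorem~\ref{d1} is a statement on the \emph{simple} rooted tree $T$ — the functions are $f,g:T\to\bR_+$, the energy is $\int_T\varphi^2$, and the preceding sentence in the paper reads ``On dyadic tree $T$ we have the following key `majorization theorem with small energy'\thinspace''. It is \emph{not} a statement on $T^2$. The bi-tree version is Conjecture~\ref{d2}, which is what your slicing argument is aimed at, and which the paper then \emph{disproves} (that is the entire content of Section~\ref{fg-cex}). You have in effect conflated Theorem~\ref{d1} with Conjecture~\ref{d2} (and with Lemma~\ref{SmEMaj2}, from which you borrowed the $\delta^2/\lambda^2$ target; Conjecture~\ref{d2} only asserts $\delta/\lambda$), and as a result you set out to prove, by a bi-tree slicing reduction, a statement that lives on a single tree and needs no slicing at all.

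This confusion is what drives the proposal to its self-undermining conclusion. You write out the slice-by-slice plan, then correctly identify that the one-dimensional maximum principle~\eqref{one-dim} in the proof of Lemma~\ref{Phi} has no analogue on $T^2$, and you end by stating that ``a naive slice-wise proof of Theorem~\ref{d1} fails.'' That observation is accurate \emph{about Conjecture~\ref{d2}} — it is precisely the paper's point — but it does not bear on Theorem~\ref{d1}, which is true and one-dimensional. As written, your proposal never actually produces a majorant $\varphi$, never verifies either claimed inequality, and concludes by conceding defeat; it is a diagnosis of why a different statement is false, not a proof of the statement at hand.

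For what it is worth, the paper itself states Theorem~\ref{d1} without proof, citing it as the known one-parameter ``majorization theorem with small energy'' (it is of the same flavour as Lemma~\ref{Phi}, where the majorant is built explicitly as a cutoff of $\tfrac{1}{\lambda}\,\bI f\cdot g$, and the energy bound uses the maximum principle~\eqref{one-dim}, which \emph{does} hold on a simple tree). So the task was to reproduce a one-dimensional construction; a bi-tree reduction was never called for, and the obstacle you ran into is simply not present in the setting of Theorem~\ref{d1}.
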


For a while we tried to prove the similar statement for $T^2$. Namely, we conjectured 
\begin{conj}
\label{d2}
Let $f, g: T^2\to \bR_+$, and  1) $f$ is superadditive in each variable, 2)  $\ \text{supp} f \subset \{\bI g \le \delta\}$.  Let $\la \ge 10\delta$. Then there exists
$\vf: T^2\to \bR_+$  such that
\begin{enumerate}
\item $\bI \vf \ge \bI f$ on $\{2\la \le \bI g \le 4\la\}$;
\item $\int_{T^2} \vf^2 \le C\frac{\delta}{\la} \int_{T^2} f^2$.
\end{enumerate}
\end{conj}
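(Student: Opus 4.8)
The plan is to mimic, one parameter at a time, the one-dimensional proof of Theorem~\ref{d1}, which is Lemma~\ref{Phi} --- itself Lemma~\ref{lem:I2-positive} glued to Lemma~\ref{lem:supadditive-l1linf}. Concretely, I would take for $\vf$ the natural two-parameter transcription of the tree-level majorant \eqref{PhiDef}: a suitably cut-off multiple of $(\bI f)\,g$, localized to the region $\{\delta<\bI g\le 4\la\}$, the outer cutoff at $4\la$ being forced by the target set $\{2\la\le\bI g\le 4\la\}$. One then verifies the two conclusions separately. For (1): fix $\om$ with $2\la\le\bI g(\om)\le 4\la$; every $x\ge\om$ with $\bI g(x)>\delta$ lies outside $\text{supp}\,f$, so one wants this to keep $\bI f$ locally constant on $\{x\ge\om\}$ in the relevant range, whence $\bI\vf(\om)$ telescopes the mass of $g$ over $\{x\ge\om,\ \bI g(x)>\delta\}$, which by the choice of $\om$ is at least $\bI g(\om)-\delta\ge\la$ --- exactly as in the derivation of \eqref{geIf}. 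The bi-tree is already less forgiving here, since $\{x\ge\om\}$ is a quadrant rather than a chain and the vanishing of $f$ on it need not spread as cleanly, but this is a minor nuisance next to what happens in (2).

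For the energy estimate (2) the plan is to iterate the one-dimensional bound: freeze the second coordinate, apply Lemma~\ref{Phi} in the first variable to the slices --- re-slicing so that the superadditive datum is the section of $f$, which is superadditive in the first variable because $f$ is superadditive in that parameter --- picking up an intermediate function with square-sum $\lesssim\frac{\delta}{\la}\int_{T^2}f^2$; then repeat in the second variable and combine. Two of the three ingredients transfer without protest: Lemma~\ref{lem:I2-positive} is a Cauchy--Schwarz/positive-kernel identity that ``does not care where it happens'', so it holds verbatim on $T^2$; and Lemma~\ref{lem:supadditive-l1linf}, applied one coordinate at a time, uses only superadditivity in that single coordinate, which $f$ has. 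Careful bookkeeping of the two passes together with $\la\ge 10\delta$ is what one hopes collapses to the conjectured $\int_{T^2}\vf^2\lesssim\frac{\delta}{\la}\int_{T^2}f^2$; the coarser accounting behind Lemma~\ref{SmEMaj2} already yields $\delta^2/\la^2$ under the stronger hypothesis there, so the weaker exponent asked for here looks, a priori, within reach.

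The hard part --- and the point where I expect the whole scheme to collapse --- is the one link in the chain that genuinely used one-dimensionality, namely \eqref{one-dim}: the proof of Lemma~\ref{Phi} needs $\bI\bigl(w\one_{\mathcal{U}}\,g\bigr)\le\sup_{\mathcal{U}}\bI(wg)\le\delta$ for the up-set $\mathcal{U}=\{\bI g\le\delta\}$, and this is a genuine maximum principle: on a simple tree, restricting the summation operator to an up-set cannot push its supremum past $\delta$. On $T^2$ there is no maximum principle. The set $\{\bI g\le\delta\}$ is still stable under moving up the bi-tree, but $\bI$ restricted to it can be far larger than $\delta$, because the two-parameter operator accumulates mass over dyadic rectangles that escape the relevant down-set in one coordinate while staying inside it in the other. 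Hence the function produced after the first pass need not satisfy the smallness hypothesis required to launch the second pass, and the iteration stalls --- precisely the failure already flagged in the remark following Lemma~\ref{Phi}. So I expect this approach not merely to be delicate on $T^2$ but to be obstructed, and, consistently with the counterexample announced for this section, I do not believe the obstruction can be removed by a cleverer choice of $\vf$: it is a structural feature of the bi-parameter Hardy operator, not an artifact of the chosen majorant.
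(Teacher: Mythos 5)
You were right not to produce a proof: the statement is false, and the paper's own treatment of it is a refutation, not a proof. Your diagnosis of where the one-variable scheme breaks --- the maximum principle \eqref{one-dim}, i.e.\ the step $\bI(w\one_{\mathcal U}g)\le\sup_{\mathcal U}\bI(wg)$ for the up-set $\mathcal U=\{\bI g\le\delta\}$, which has no analogue on $T^2$ --- is exactly the obstruction the paper flags in the remark after Lemma~\ref{Phi}. But identifying why a particular proof stalls does not establish that \emph{no} $\vf$ exists, and this is the gap between your analysis and what the paper actually does. The paper exhibits explicit $f,g$ for which conclusions (1) and (2) are jointly impossible: take $f=\bI^*\mu$ with $\mu$ the unit point mass at the root (so $f=\one_{\{o\}}$ and $\bI f\equiv 1$), and $g=\bI^*\nu$ with $\nu$ a sum of $\tfrac{n}{\log n}$ point masses of mass $\tfrac1{n^2}$ placed at the South-West corner squares $\om_j$ of the diagonal squares $Q_j$. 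Then $\delta=\|\nu\|=\tfrac1{n\log n}$, one can take $\la\asymp\tfrac1n$, and the level set $\{2\la\le\bI g\le4\la\}$ contains the union $F$ of the $\asymp\log n$ nested rectangles $q_{jk}$ around each $\om_j$. Any $\vf$ with $\bI\vf\ge\bI f=1$ on $F$ and $\int_{T^2}\vf^2\le C\tfrac{\delta}{\la}\int_{T^2}f^2=\tfrac{C}{\log n}$ would force $\text{cap}(F)\le\tfrac{C}{\log n}$, whereas a direct estimate of the equilibrium potential (splitting $\bV^\mu$ into the contributions $\bV_1,\dots,\bV_4$ of diagonal, interior, and the two families of elongated rectangles) shows $\text{cap}(F)\asymp1$.

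Note also that your guess that conclusion (1) is ``a minor nuisance'' is essentially correct for this example ($\bI f\equiv1$ trivially, so (1) costs nothing), but your hope that the weaker exponent $\delta/\la$ is ``within reach'' is precisely what the capacity computation kills: the counterexample is calibrated so that $\delta/\la\asymp\tfrac1{\log n}\to0$ while the capacity of the target set stays bounded below. So the failure is not an artifact of the majorant \eqref{PhiDef}, as you suspected, but your proposal stops at a plausibility argument where the paper supplies the quantitative construction that settles the matter.
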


For some very special cases, e. g. for $f=g$, this has been proved, and turned out to be a key result in describing the embedding  measures for the Dirichlet spaces in tri--disc into $L^2(\bD^3, d\rho)$. See
\cite{AMPVZ-K}, \cite{MPVZ1}, \cite{MPVZ2}

\bigskip

Now we will show that this is not true in general. 

Moreover, below $f, g$ have special form, namely
$$
f=\bI^*\mu, \,g=\bI^*\nu,
$$
 with certain positive measures on $T^2$.  And measure $\mu$ is trivial, it is a delta measure of  mass $1$ at the root $o$ of $T^2$. In particular, $f(o)=1, f(v)=0, \forall  v\neq o$. Also $\bI f \equiv 1$ on $T_2$.

\bigskip

The choice of $\nu$ is more sophisticated. Choose large $n= 2^s$ and denote $2^M:= \frac{n}{\log n}$.

In the unit square $Q^0$ consider dyadic sub-squares $Q_1, \dots, Q_{2^M}$, which are South-West to North-East diagonal squares of size $2^{-M}\times 2^{-M}$.

In each $Q_j$ choose $\om_j$, the South-West corner dyadic square of size $2^{-n} \cdot 2^{-M}$.

Measure $\nu$ is the sum of delta measures at $\om_j, j=1, \dots, \frac{n}{\log n}$, each of muss $\frac1{n^2}$. Obviously 
$$
g(o)=\bI^* \nu(o)=\bI\bI^* \nu(o)= (\bV^* \nu)(o) = \|\nu\|= \frac1{n^2}\cdot \frac{n}{\log n}= \frac1{n\log n}=:\delta.
$$
So we chose $\delta$ and $f, g$ satisfy $\ \text{supp} f =\{o\}\subset \{ \bI g \le \delta\}$. Also $f$ is sub-additive in both variables on $T^2$: it is just a characteristic function of the root.

\medskip

Now what is $\la$, and what is the set $\{2\la \le \bI g \le 4\la\}$?

\medskip

Consider (by symmetry this will be enough) $Q_1$ and $\om_1$ and consider the family $\cF_1$ of dyadic rectangles containing $\om_1$ and contained in $Q_1$ of the following sort:
$$
[0, 2^{-n} 2^{-M}]\times [0, 2^{-M}], [0, 2^{-n/2} 2^{-M}]\times [0, 2^{-2}2^{-M}], \dots, 
$$
$$
[0, 2^{-n/2^k} 2^{-M}]\times [0, 2^{-2^k}2^{-M}],\dots,
$$
there are approximately $\log n$ of them, and they are called $q_{10}, q_{11},\dots, q_{1k},\dots$.

\begin{lemma}
\label{g}
$\bI g(q_{ik}) \asymp \frac1{n}\quad \forall k$.
\end{lemma}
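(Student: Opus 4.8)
The plan is to compute $\bI g(q_{1k})$ directly by decomposing the sum over dyadic rectangles $R \ni q_{1k}$ into a part lying inside $Q_1$ and a part that has escaped $Q_1$, and observe that only $\om_1$ (among all the support points $\om_j$ of $\nu$) is relevant until one reaches a scale comparable to $Q^0$ itself. Recall $g = \bI^*\nu$, so for any vertex $v \in T^2$ we have $\bI g(v) = \bI\bI^*\nu(v) = \sum_{R \geq v} \nu(\{\om : \om \le R\}) = \sum_{\om_j} \nu(\{\om_j\}) \cdot \#\{R : v \le R,\ \om_j \le R\}$. Since each $\om_j$ carries mass $n^{-2}$, the quantity $\bI g(v)$ equals $n^{-2}$ times the total number of pairs (dyadic rectangle $R$, index $j$) with $R$ above both $v$ and $\om_j$. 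So the whole lemma reduces to a counting problem: for $v = q_{1k}$, count the dyadic rectangles lying above both $q_{1k}$ and some $\om_j$.

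The key steps, in order: (1) First I would note that $q_{1k} \subset Q_1$ and $\om_1 \subset q_{1k}$ by the very construction of the family $\cF_1$ (each $q_{1k}$ contains $\om_1$ and sits inside $Q_1$), so every rectangle above $q_{1k}$ that is also $\subseteq Q^0$ splits according to whether it is $\subseteq Q_1$ or properly larger in at least one coordinate. (2) For rectangles $R$ with $q_{1k} \le R \subseteq Q_1$: since $Q_1$ is a genuine dyadic square of sidelength $2^{-M}$ and $\om_1$ is the corner square of sidelength $2^{-n-M}$, the number of dyadic rectangles between $q_{1k}$ and $Q_1$ is a product of two chains of dyadic scales, whose lengths multiply to something $\asymp n$ (the two exponents in the definition of $q_{1k}$, namely $n/2^k$ in one direction and $2^k$ in the other, are designed precisely so that $(n - n/2^k)$-ish choices in one direction times $(M - 2^k + \text{const})$... more carefully, the number of dyadic ancestors of $q_{1k}$ inside $Q_1$ in the $x$-direction is $\asymp n/2^k$ and in the $y$-direction is $\asymp 2^k$, so the product is $\asymp n$). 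These rectangles all lie above $\om_1$ only (the other $\om_j$ are in disjoint squares $Q_j$), contributing $n^{-2} \cdot \asymp n = \asymp n^{-1}$. (3) For rectangles $R \supseteq Q_1$ strictly, I would bound their number: there are only $O(\log^2 n)$ dyadic rectangles between $Q_1$ and $Q^0$ since $Q_1$ has sidelength $2^{-M}$ with $2^M = n/\log n$, so $M = s - \log_2\log n \asymp \log n$; each such large rectangle may sit above several $\om_j$, but there are only $2^M = n/\log n$ of them, so the total contribution here is at most $n^{-2} \cdot O(\log^2 n) \cdot (n/\log n) = O(\log n / n) = o(n^{-1})$ — wait, this is actually larger, so I must be more careful: a rectangle just above $Q_1$ in one coordinate and equal in the other contains only $\om_1$; only rectangles that have grown in the diagonal-separating direction past scale $2^{-M}$ capture extra $\om_j$'s, and the number of (rectangle, extra index) incidences is $O(\log^2 n \cdot n/\log n) = O(n\log n)$, contributing $O(\log n/n)$. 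Since $\log n / n \ll 1/n$ is false, I need the lower-order term to genuinely be smaller: in fact the diagonal placement of the $Q_j$ ensures a dyadic rectangle above $Q_1$ of dimensions $2^{-a} \times 2^{-b}$ captures $\asymp 2^{\min(M-a, M-b, M)}$ of the $\om_j$, and summing $\sum 2^{M-\max(a,b)}$ over the $O(\log^2 n)$ choices is dominated by the smallest rectangles, giving $O(1)$ total incidences beyond $\om_1$, hence contribution $O(n^{-2}) = o(n^{-1})$.

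The main obstacle will be step (3): controlling exactly how many of the delta masses $\om_j$ a large dyadic rectangle (one that has escaped $Q_1$) can see, and confirming that this contribution is genuinely of smaller order than $1/n$ rather than comparable to it. This is where the diagonal (South-West to North-East) placement of the squares $Q_j$ is essential — a dyadic rectangle that is long and thin cannot straddle many of the diagonal squares at once, and a rectangle large enough to contain many of them is forced to be close to $Q^0$ in both coordinates, of which there are only $O(1)$ such near the top. I would make this precise by the observation that a dyadic rectangle $[0,2^{-a}]\times[0,2^{-b}]$ contains $\om_j$ iff $\om_j$'s coordinates are $\le 2^{-a}$ resp. $\le 2^{-b}$, and because the $\om_j$ lie on the anti-diagonal of $Q^0$, at most $\asymp 2^{\min(M-a,\,M-b)_+}$ indices qualify; summing the geometric series confirms the bound. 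Granting this, steps (1)–(2) pin down $\bI g(q_{1k}) = n^{-2}(\asymp n + O(1)) \asymp 1/n$ uniformly in $k$, which is the claim; the argument for $q_{ik}$ with general $i$ is identical by the symmetry of the configuration.
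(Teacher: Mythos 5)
The paper does not actually prove Lemma~\ref{g} here; it only cites \cite{AMPVZ-K}, \cite{MPV} for it, so there is no in-paper proof to compare against. Your overall strategy is the right one: write $\bI g(q_{1k})=\bI\bI^*\nu(q_{1k})=n^{-2}\cdot\#\{(R,j): q_{1k}\le R,\ \om_j\le R\}$ and split the count by whether $R\subseteq Q_1$ or not. The main term is correct: ancestors $R$ of $q_{1k}$ lying inside $Q_1$ see only $\om_1$, and they number $\approx (n/2^k+1)(2^k+1)\asymp n$ (for $k$ in the relevant middle range), giving the $\asymp 1/n$. This is the crux of the lemma and you have it.

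However, there are a few quantitative slips worth correcting. In step (3) you eventually claim that the incidence count from rectangles escaping $Q_1$ in \emph{both} directions is $O(1)$, via ``the sum is dominated by the smallest rectangles.'' That is backwards. Writing $R=[0,2^{-a}]\times[0,2^{-b}]$ with $a,b<M$, you have $\#\{j:\om_j\subseteq R\}\asymp 2^{M-\max(a,b)}$ (which you state correctly), but the sum
$$\sum_{a,b<M} 2^{M-\max(a,b)}=\sum_{c=0}^{M-1}(2c+1)\,2^{M-c}\asymp 2^{M}=\frac{n}{\log n}$$
is dominated by the \emph{largest} rectangles (small $a,b$, near $Q^0$), not the smallest. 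So the total number of incidences is $\asymp 2^M$, not $O(1)$, and the contribution is $\asymp \frac{1}{n\log n}$, not $O(n^{-2})$. The final conclusion $o(1/n)$ survives, but the intermediate claim is off by a factor $\approx n/\log n$. You also never explicitly bound the ``escape in exactly one direction'' rectangles ($a<M\le b$ or $b<M\le a$): you correctly note they see only $\om_1$, but their count is $\approx M\cdot 2^k+M\cdot (n/2^k)$, which is $o(n)$ only for $k$ bounded away from $0$ and from $\log_2 n$; you should say so.

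That brings up the last point: the assertion ``$\asymp 1/n$ uniformly in $k$'' is too strong. For $k$ near $0$ or near $\log_2 n$, one of the one-sided escape counts is $\approx n\log n$, so $\bI g(q_{1k})\asymp\frac{\log n}{n}$, not $\asymp\frac1n$. (The same mild imprecision is present in the paper's statement of the lemma; downstream, the paper only uses $k\in[\tfrac{\log n}{2},\tfrac{3\log n}{4}]$, where the estimate is correct.) It would be better to state the range of $k$ for which the two-sided bound actually holds and observe that this suffices for the application.
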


It is proved in \cite{AMPVZ-K}, \cite{MPV}.

\bigskip

Let $F:=\cup_{ik} q_{ik}$.

So we choose $\la=\frac{c}n$ with appropriate $c$.  Then
$$
F\subset \{2\la \le \bI g \le 4\la\}\,.
$$

As it is obvious that $\bI f\ge 1$ everywhere, so if $\vf$ as in Conjecture \ref{d2} would exist, we would have $\bI \vf \ge 1$ on $F$ and  (by the second claim of Conjecture \ref{d2})
$$
\int_{T^2} \vf^2 \le  \frac{C\delta}{\la} \int_{T^2} f^2=  \frac{C}{\log n} \int_{T^2} f^2 = \frac{C}{\log n}\,.
$$
By the definition of capacity this would mean that
$$
\text{cap} (F) \le \frac{C}{\log n}\,.
$$

In the next section we show that $\text{cap} (F) \asymp 1$. Hence, Conjecture \ref{d2} is false.

\subsection{Capacity of $F$ is equivalent to $1$}

Let $\rho$ on $F$ be a capacitary measure of $F$, and $\rho_{jk}$ be its mass on $q_{jk}$.
By symmetry $\rho_{jk}$ does not depend on $j=1, \dots, \frac{n}{\log n}$.

\medskip 

The proof of the fact that 
$$
\rho_k:= \rho_{jk},\,\, j=1, \dots, \frac{n}{\log n}
$$
have the average $\ge \frac{c_0}{n}$, that is that
\begin{equation}
\label{E}
\bE \rho = \frac{\sum_{k=\frac{\log n}{2}}^{\frac{3\log n}{4}} \rho_k}{1/2\log n} \ge \frac{c_0}{n}
\end{equation}
follows below.

\bigskip  

In its turn it gives the required
\begin{equation}
\label{capF}
\text{cap} F \asymp 1\,.
\end{equation}

\medskip

Let us  first derive \eqref{capF} from \eqref{E}.
Measure $\mu$ that charges $\rho_k$ on each $q_{jk}, j=1,\dots, \frac{n}{\log n}; k= \frac{\log n}{2},\dots, \frac{3\log n}{4}$ is equilibrium so it gives $\bV^\mu \equiv 1$ on each $q_{jk}$.
Then \eqref{capF} follows like this: $\text{cap} F= \|\mu\|= \frac{n}{\log n} \sum_{k=\frac{\log n}{2}}^{\frac{3\log n}{4}} \rho_k = \frac12 n \bE \rho$. Hence $\text{cap} F= \|\mu\|\ge \frac12 c_0$ if \eqref{E} is proved.

\medskip

Now let us prove \eqref{E}.
Everything is symmetric in $j$, so let $j=1$ and let us fix $k$ in $[\frac{\log n}{2}, \frac{3\log n}{4}]$. We know that
$$
1\ge \bV^\mu \quad \text{on} \,\, q_{1k},
$$
and now let us estimate this potential from above.  For that we split $\bV^\mu$ to
$\bV_1$, this is the contribution of rectangles  containing $Q_1$, to
$\bV_2$, the contribution of rectangles containing $q_{1k}$ and contained in $Q_1$, and
$\bV_3$, the contribution of rectangles containing $q_{1k}$ that strictly intersect $Q_1$ and that are ``vertical'', meaning that there vertical side contains vertical side of $Q_1$.
(There is $\bV_4$ totally symmetric to $\bV_3$.)

Two of those are easy, $\bV_1$ ``almost'' consists of ``diagonal squares containing $Q_1$. Not quite, but other rectangles are also easy to take care. 
Denote
$$
r=\|\mu\|, \quad M=\log\frac{n}{\log n}\,.
$$
Then we write diagonal part first and then the rest:
$$
\bV_1= r+\frac{r}{2} +\frac{r}{4} + \dots \frac{r}{2^M}  +\frac{r}{2}  +\frac{r}{2} + 2\frac{r}{4} + 2\frac{r}{4} +
\dots   k\frac{r}{2^k}  + 2\frac{r}{2^k}  +\dots = C_1 r
$$

To estimate $\bV_2$ notice that there are at most $c n$ rectangles containing $q_{1k}$ and contained in $Q_1$ that do not contain any other $q$, there are $\frac{c n}{2}$ of rectangles contain $q_{1k}$ and one of its sibling (and lie in $Q_1$),  there are $\frac{c n}{4}$ of rectangles contain $q_{1k}$ and two of its sibling (and lie in $Q_1$), et cetera.

Hence,
$$
\bV_2 \le Cn\rho_k + \frac{Cn}{2} \rho_{k\pm 1} +  \frac{Cn}{4} \rho_{k\pm 2}+\dots
$$

Now consider $\bV_3$. The horizontal size of $q_{1k}$  is $2^{-M}\cdot 2^{-n2^k}$. Its vertical size
is $2^{-M}\cdot 2^{-2^k}$. So the rectangles of the third type that do not contain the siblings: their number is
at most (we are using that $k\ge \frac12 \log n$)
$$
n2^{-k} ( 2^k+M) \le n + \sqrt{n}\log n\,.
$$
Those that contain $q_{1k}$ and one sibling, there number is at most
$$ 
n2^{-k} ( 2^{k-1}+M) \le \frac{n}2 + \sqrt{n}\log n\,.
$$
We continue, and get that
$$
\bV_3 \le n \rho_k + \frac{n}2  \rho_{k\pm1} + \frac{n}4 \rho_{k\pm 2}+\dots + \sqrt{n}\log n (\sum \rho_s)\,.
$$
Add all $\bV_i$:
$$
1 \le \bV_1+\bV_2 +\bV_3 +\bV_4 \le  C_1 r+ C n \rho_k+ \frac{Cn}2  \rho_{k\pm1} + \frac{Cn}4 \rho_{k\pm 2}+\dots + \sqrt{n}\log n (\sum \rho_s)\,.
$$
Now average over $k$. Notice that 
$$
r=\|\mu\|= \frac{n}{\log n} \sum\rho s  =\frac12  n \bE \rho
$$
Hence,
$$
1 \le C' n \bE\rho + Cn \bE\rho + \frac{Cn}2 \bE\rho+  \frac{Cn}4 \bE\rho + \dots +  \frac12\sqrt{n}\log^2 n \,\bE\rho\,.
$$
Therefore, $\bE\rho\ge \frac{c_0}{n}$ and \eqref{E} is proved.

\end{document}